\newtheorem{theorem}{Theorem}
\newtheorem{lemma}{Lemma}
\newtheorem{proposition}{Proposition}
\newtheorem{definition}{Definition}
\newtheorem{corollary}{Corollary}
\newtheorem{claim}{Claim}
\newcommand{\f}[2]{\frac{#1}{#2}}
\newcommand{\de}{\delta}
\newcommand{\la}{\lambda}
\newcommand{\La}{\Lambda}
\newcommand{\Si}{\Sigma}
\newcommand{\xsb}{X^{s,b}}
\newcommand{\ysb}{Y^{s,b}}
\newcommand{\rone}{\mathbf R^1}
\newcommand{\zz}{\mathbf Z}
\newcommand{\cm}{\mathcal M}
\newcommand{\cc}{\mathcal C}
\newcommand{\cx}{\mathcal X}
\newcommand{\p}{\partial}
\newcommand{\beq}{\begin{equation}}
\newcommand{\eeq}{\end{equation}}
\newcommand{\beqna}{\begin{eqnarray*}}
\newcommand{\eeqna}{\end{eqnarray*}}
\newcommand{\beqn}{\begin{equation*}}
\newcommand{\eeqn}{\end{equation*}}
\newcommand{\bp}{\begin{proof}}
\newcommand{\ep}{\end{proof}}
\newcommand{\bprop}{\begin{proposition}}
\newcommand{\eprop}{\end{proposition}}
\newcommand{\bt}{\begin{theorem}}
\newcommand{\et}{\end{theorem}}
\newcommand{\bex}{\begin{Example}}
\newcommand{\eex}{\end{Example}}
\newcommand{\bc}{\begin{corollary}}
\newcommand{\ec}{\end{corollary}}
\newcommand{\bcl}{\begin{claim}}
\newcommand{\ecl}{\end{claim}}
\newcommand{\bl}{\begin{lemma}}
\newcommand{\el}{\end{lemma}}
\begin{document}

\title
[l.w.p. for periodic  mKdV]
{Local well-posedness for  the periodic   mKdV in  $H^{1/4+}$ }

\author{ Atanas Stefanov}

\address{Atanas Stefanov, 
405, Snow Hall, 1460 Jayhawk Blvd. , 
Department of Mathematics,
University of Kansas,
Lawrence, KS~66045, USA}
\email{stefanov@ku.edu}
\date{\today}

\thanks{The author gratefully acknowledges partial support by  NSF-DMS \# 1313107. }

\subjclass[2000]{35Q53 (35B10, 35B30)}

\keywords{Modified KdV, periodic boundary conditions, low regularity well-posedness.}

\begin{abstract}
We study  the mKdV equation with periodic boundary conditions. We establish low regularity well -posedness in $H^{\f{1}{4}+}(T)$. The proof involves a non-linear, solution dependent gauge transformation, similar to the one considered in \cite{NTT}. 
\end{abstract}

\maketitle

\section{Introduction} 
{\bf The main result of this paper is invalidated, due to the failure of  the estimate $(16)$ below.}
Consider the real-valued modified Korteweg-de Vries equation with periodic boundary condition
\begin{equation}\label{mkdv}
\left| \begin{array}{l} u_t + u_{xxx} +u^2 \p_x u=0,\\
	u(0)= f \in H^{s} (\mathbf{T}).\end{array} \right.
\end{equation}
Note that if $f$ is real-valued, then 
$$
f(x)=\sum_{k=-\infty}^\infty \hat{f}(k) e^{-2\pi i k x}, \ \ \overline{\hat{f}(k)}=\hat{f}(-k). 
$$
Even though there were quite a few results dealing with the well-posedness of this model with standard energy methods, it was Bourgain,   who has initiated in \cite{Bou1}, the study of the well-posedness of such models at low regularity. The main new technical idea was the introduction of adapted to the evolution function spaces (coined  $X^{s,b}$ spaces), which are more sensitive than the standard energy spaces for the problems at consideration. We should mention that in the case of the problem on $\rone$, better results are achieved by using the local smoothing estimates associated with the Airy equation, as shown in \cite{CPV4}. 

The problem for obtaining local well-posedness in spaces with less and less Sobolev regularity has received lots of attention by many auhors in the last twenty years.  Since Bourgain has showed his basic trilinear estimate (which coupled with his method  gives the local well-posedness in $H^{1/2}(\mathbf{T})$), it was shown by Kenig-Ponce-Vega, \cite{CPV1}  that this estimate actually  fails in $H^s (\mathbf{T}), s<1/2$. In fact, not only this estimate fails, but the solution map was shown to be not uniformly continuous when $f\in H^s(\mathbf{T}), s<1/2$, \cite{CCT}. 

However, this does not necessarily mean that the local well-posedness fails.  Takaoka-Tsutsumi, \cite{TT} have considered the problem in $H^s, s>3/8$ and they have shown the local well-posedness, by using an iteration argument in $X^{s,b}$ type spaces, which depends on the initial data. This results were further extended in the work of Nakanishi-Takaoka-Tsutsumi, \cite{NTT}, where the authors have been able to push the l.w.p.  results to  $H^{1/3+}(\mathbf{T})$. Note that the authors have been able to provide existence results in $H^{1/4+}$, under some additional restrictions on the growth of the Fourier coefficients of the data. The main goal of this paper is to consider data in $H^{\f{1}{4}+}(\mathbf{T})$ and to show local well-posedness.

We start with some standard reductions. For nice solutions $u$ of \eqref{mkdv}, we have conservation of $L^2$~norm.  By changing the spatial variable $x$ to $x+ct$ where $c= \frac{1}{2\pi} \|u_0\|_{L^2}^2$, we have 
\begin{equation}
\label{a:10}
\left|\begin{array}{l}
\p_t u + \p_x^3 u  + (u^2 - \frac{1}{2\pi} \int_{\mathbf{T}} u^2 (t,x)\, dx) \partial_x u =0\\
u(0) = f.\end{array}\right.
\end{equation}
This is the equation that we consider from now on. 
On the Fourier side, the equation is\footnote{For more details about this derivation, the reader may consult \cite{NTT}, p. 1639. } 
$$
\p_t \widehat{u}(t,k) - i k^3 \widehat{u}(t,k) = -i\frac{k}{3}  \sum_{\tiny\begin{array}{c} k_1 + k_2 + k_3 = k,\quad k_j, k\neq 0\\ (k_1 + k_2)(k_2 + k_3)(k_3+ k_1) \neq 0\end{array}} \widehat{u}(k_1) \widehat{u}(k_2) \widehat{u}(k_3) + ik|\widehat{u}(k)|^2 \widehat{u}(k).
$$ 
The first term is called non-resonant, while the other term is referred to as resonant. The non-resonant trilinear   term $\mathcal{NR}$     is introduced to be 
$$
\mathcal{NR}(v_1, v_2, v_3)(k):=
 -i\frac{k}{3}  \sum_{\tiny\begin{array}{c} k_1 + k_2 + k_3 = k,\quad k_j, k\neq 0\\ (k_1 + k_2)(k_2 + k_3)(k_3+ k_1) \neq 0\end{array}} \widehat{v_1}(k_1) \widehat{v_2}(k_2) \widehat{v_3}(k_3)
$$ 
We will sometimes denote $\mathcal{NR}(h):=\mathcal{NR}(h,h,h)$.

\subsection{Change of variables} 
We start with a general discussion  about the change of variables that is required. Basically, one needs to hide the resonant term $ ik|\widehat{u}(k)|^2 \widehat{u}(k)$. To that end,  introduce the change of variables, 
$$
\hat{u}(t,k):=\hat{v}(t,k) + \hat{f}(k)e^{i(t k^3+k \int_0^t |\hat{u}(s,k)|^2 ds)}.
$$
Denote for convenience  $P(t,k):=t k^3+k \int_0^t |\hat{u}(s,k)|^2 ds$.  This would transform the equation into a new one for $v$, in the form 
\begin{equation}
\label{v}
\begin{array}{rl}
\p_t \hat{v}(k)-i(k^3+k|\hat{f}(k)|^2)\hat{v}(k)  &=   i k |\hat{v}(t,k)|^2 \hat{v}(t,k)+ \\
&+ 2i k \Re(\hat{f}(k) e^{i P(t,k)}\overline{\hat{v}(t,k)}) \hat{v}(k)+ \\
 &+ \mathcal{NR}(\otimes_{j=1}^3  \hat{f}(k_j) e^{i  P(t,k_j)} +\hat{v}(k_j))\\
v(0,k) & =  0 
\end{array}
\end{equation}
The disadvantage of this equation for $v$ is that the old variable $u$ is still present inside at the phase function $P$. Nevertheless, for uniqueness purposes, it is good to consider  exactly \eqref{v}.  

For existence results however,  we  seek to introduce a new variable $z$, so that the phase variable (denoted $Q$ below)  is dependent only upon the new variable $z$ and which does not contain a reference to the old one $u$.   We need the following  
\begin{lemma}
\label{lo:1}
Let $f\in H^{s_0}(T)$, $s_0>0$.  Let $\{\hat{z}(t,k)\}_k$ are given continuous functions, defined on an interval $[0,T]$. Assuming that there exists $C$, so that 
\begin{equation}
\label{k:90}
\sup_{0<t<T} \sup_{k} <k>^{1-s_0}  |\hat{z}(t,k)|\leq C .
\end{equation}
then for  the infinite system of (non-linear) ODE's 
\begin{equation}
\label{k:100}
Q'(z;t,k)=k^3+k| \hat{f}(k)e^{i Q(z;t,k)}+\hat{z}(t,k)|^2,  Q(z;0,k)=0, \ \ k\in \zz
\end{equation}
there exists a time interval $[0,T_0]$ ,  $ T_0\geq 
\min(T, \f{1}{100 C_0\|f\|_{H^{s_0}}})$, so that it  has unique solution $\{Q(z;k,t)\}_{k\in zz} :[0,T_0]\to \rone$.    
In particular, the condition \eqref{k:90} is satisfied if $z=\sum_k \hat{z}(t,k)e^{i k x}\in L^\infty_t H^{1-s_0}$. 
\end{lemma}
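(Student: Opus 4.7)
The plan is to exploit the fact that the infinite system \eqref{k:100} is completely decoupled across the Fourier index: the ODE for $Q(z;\cdot,k)$ involves only $Q(z;\cdot,k)$ itself, together with the (data-dependent but $Q$-independent) quantities $\hat f(k)$ and $\hat z(\cdot,k)$. Thus the problem reduces to infinitely many independent scalar non-autonomous ODEs, one per $k\in\zz$, and it suffices to solve each one individually and assemble the solutions.

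Fix $k\in\zz$ and write the right-hand side of \eqref{k:100} as
$$F_k(t,Q)=k^3+k|\hat f(k)|^2+k|\hat z(t,k)|^2+2k\,\Re\!\bigl(\hat f(k)\overline{\hat z(t,k)}e^{iQ}\bigr);$$
only the last summand depends on $Q$, and its derivative in $Q$ is bounded by $2|k||\hat f(k)||\hat z(t,k)|$, which under hypothesis \eqref{k:90} is at most $2C|k|^{s_0}|\hat f(k)|$ uniformly in $t\in[0,T]$. Hence $F_k$ is globally Lipschitz in $Q$ with a $t$-uniform constant, which rules out finite-time blow-up of $Q(z;\cdot,k)$. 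Classical Picard--Lindel\"of then yields a unique $C^1$ solution on the entire interval $[0,T]$; assembling these per-$k$ solutions produces the desired family $\{Q(z;t,k)\}_{k\in\zz}$ on $[0,T_0]$ with $T_0=T$, so the stated lower bound $\min(T,\f{1}{100 C_0\|f\|_{H^{s_0}}})$ holds a fortiori. The ``in particular'' clause is immediate: for $z\in L^\infty_tH^{1-s_0}$ one has termwise $\langle k\rangle^{2(1-s_0)}|\hat z(t,k)|^2\le\|z(t)\|_{H^{1-s_0}}^2$, which gives \eqref{k:90} with $C=\|z\|_{L^\infty_tH^{1-s_0}}$.

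The main (and essentially only) obstacle to avoid here is psychological: the phrase ``infinite system of non-linear ODEs'' suggests a Banach-space fixed point argument in some $k$-weighted sequence space, but this is unnecessary once one observes that the system is diagonal in $k$. No genuine nonlinear iteration, Gronwall step, or smallness argument on $f$ is actually required; the conservative lower bound on $T_0$ involving $\|f\|_{H^{s_0}}$ is merely a convenient cushion and not a sharp threshold for existence.
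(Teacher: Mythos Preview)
Your argument is correct and in fact yields the stronger conclusion $T_0=T$, not merely the lower bound stated in the lemma. The key observation you make --- that the only $Q$-dependence of $F_k(t,Q)$ is through the bounded factor $e^{iQ}$, so $F_k$ is globally Lipschitz in $Q$ with constant $2|k|\,|\hat f(k)|\sup_t|\hat z(t,k)|<\infty$ --- immediately rules out blow-up and gives a unique $C^1$ solution on all of $[0,T]$ for each $k$ separately.

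The paper's own proof takes a slightly different route: it rewrites \eqref{k:100} as the integral equation \eqref{k:95} and runs a Banach contraction on the map $Q\mapsto k\int_0^t 2\Re(\hat f(k)e^{iQ}\overline{\hat z(s,k)})\,ds$, for which the Lipschitz constant is bounded by $10\,T_0\,|k|\,|\hat f(k)|\sup_s|\hat z(s,k)|\le 10\,C_0\,\|f\|_{H^{s_0}}\,T_0$; contractivity then forces the restriction $T_0<1/(20\,C_0\,\|f\|_{H^{s_0}})$. Both arguments exploit the same per-$k$ decoupling and the same Lipschitz estimate; the difference is that the paper packages it as a short-time contraction (hence the $\|f\|_{H^{s_0}}$-dependent time cutoff), whereas you invoke the global-Lipschitz extension principle directly and thereby avoid any artificial smallness condition on $T_0$. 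Your approach is the more efficient one here; the paper's time restriction is, as you suspected, not intrinsic to this lemma.
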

\noindent {\bf Remark:}  For the most part, we will suppress the dependence of $Q$ on $z$ in our notations. 
\begin{proof}
The existence argument is easy and it can be justified, based on the theory of nonlinear ODE with Lipschitz right hand sides.  The non-trivial part of the statement is the common interval of existence, which is independent of $k$. 

  To that end, rewrite the system of ODE's as equivalent system of integral equations 
\begin{equation}
\label{k:95}
Q(t,k)= t (k^3+k |\hat{f}(k)|^2) + k\int_0^t (2\Re(\hat{f}(k) e^{ i Q(s,k)} \overline{\hat{z}(s,k)}) + |\hat{z}(s,k)|^2) ds 
\end{equation} 
In order to check that the fixed point argument produces a solution in an interval $[0, T_0]$, we need to check  the contractivity of $Q\to  \Sigma(Q):=k \int_0^t (2\Re(\hat{f}(k) e^{ i Q} \overline{\hat{z}(s,k)}) ds$. Indeed, 
\begin{eqnarray*}
 \sup_{0<t<T_0} |\Si(Q_1)(t)-\Si(Q_2)(t)| &\leq & 10 T_0 |k| |\hat{f}(k)| \sup_{0<s<T_0} 
|Q_1(s)-Q_2(s)| \sup_{0<s<T_0}|\hat{z}(s,k)|< \\
& \leq & 10 \|f\|_{H^{s_0}} C_0 T_0 \sup_{0<s<T_0} \|Q_1(s)-Q_2(s)\|,
\end{eqnarray*}
since 
$$
|k| |\hat{f}(k)|  \sup_{0<\tau<T_0}  |\hat{z}(\tau,k)|\leq  C \|f\|_{H^{s_0}} \sup_{k,\tau} <k>^{1-s_0} |\hat{z}(\tau,k)|\leq C.
$$

It follows that $\Si$ is a contraction, whenever $T_0<1/(20 C_0 \|f\|_{H^{s_0}}), T_0<T$ and the lemma is proved.
\end{proof}
We now continue with the precise definition of the transformation. In the new variable $z:[0,T]\to \cc$, let  $Q=Q_z$ as in Lemma \ref{lo:1}. That is, let $Q$ be the solution of \eqref{k:100}. Clearly, 
$z$  needs to be in $H^{1-s_0}$, which will be established a-posteriori. 
Set  
$$
\hat{u}(t,k):=\hat{z}(t,k) + \hat{f}(k) e^{i  Q(t,k)}.
$$
Note  $\hat{z}(0,k)=0$, since $\hat{u}(0,k)=\hat{f}(k), Q(0,k)=0$. 
In terms of $z$, the equation {\it equivalent to the original equation} \eqref{a:10}   becomes 
\begin{equation}
\label{z}
\begin{array}{rl}
\p_t \hat{z}(k)-i(k^3+k|\hat{f}(k)|^2)\hat{z}(k)  &=   i k |\hat{z}(t,k)|^2 \hat{z}(t,k)+ \\
&+ 2i k \Re(\hat{f}(k) e^{i Q(t,k)}\overline{\hat{z}(t,k)}) \hat{z}(k)+ \\
 &+ \mathcal{NR}(\otimes_{j=1}^3  \hat{f}(k_j) e^{i  Q(t,k_j)} +\hat{z}(k_j))\\
z(0,k)  =& 0 
\end{array} 
\end{equation}
We are now ready to give the definition of local existence  that we will be working with.  
\begin{definition}
\label{defi:1}
Let $1>s_0>0$ and $f\in H^{s_0}(T)$. We say that $u$ is a solution to the  mKdV equation, with initial data $f$, if there exists $T>0$ and $z(t,x)\in L^\infty(0,T)  H^{1-s_0}_x$ so that the pair  $z$ and the unique $Q=Q(z;t):[0,T_0]\to \rone$ produced by Lemma \ref{lo:1} satisfy  the preceding equation in strong sense. More precisely, 
\begin{eqnarray*}
  \hat{z}(t,k) &=& \int_0^t e^{i (t-s) (k^3+k|\hat{f}(k)|^2)} [i k |\hat{z}(s,k)|^2 \hat{z}(s,k)+  2i k \Re(\hat{f}(k) e^{i Q(s,k)}\overline{\hat{z}(t,k)}) \hat{z}(k)] ds + \\
& + & \int_0^t e^{i (t-s) (k^3+k|\hat{f}(k)|^2)} [\mathcal{NR}(\otimes_{j=1}^3  \hat{f}(k_j) e^{i  Q(t,k_j)} +\hat{z}(k_j))]ds.
\end{eqnarray*}
\end{definition}

\subsection{Function spaces} 
\label{sec:fs}
Since we study a local well-posedness question, we introduce  function spaces, in which the solutions   will live. Naturally, these will be versions of the ubiquitous Bourgain spaces, initially defined for the pure KdV  evolution for  functions on the torus $z: \rone\times \mathbf{T}\to \cc, 
z(t,x)=\sum_k z_k(t) e^{ i k x}$  
$$
\|z\|_{\xsb}^2 =  \sum_{k} \int_{\rone}   <\tau-k^3>^{2b} <k>^{2s} 
|\hat{z}(\tau,k)|^2   d\tau. 
$$
In addition,  we introduce the   modified Bourgain space $\ysb$    as follows 
\begin{eqnarray*}
\|z\|_{\ysb}^2 &=&  \sum_{k} \int_{\rone}   <\tau-k^3-k|\hat{f}(k)|^2>^{2b} <k>^{2s} |\hat{z}(\tau,k)|^2   d\tau. 
\end{eqnarray*}
It will also be convenient to use the local version of these spaces, namely for any $T>0$,   define (for any $\La=\xsb, \ysb $) 
$$
\|v\|_{\La_T}=\inf\{\|u\|_\La, u\in \La, u=v \ \textup{on}\ \ (-T,T) \}
$$
For the remainder of this paper we will tacitly assume that $T<1$. 
\subsection{Main result} 
 The following is the main result of this work. 
\begin{theorem}
\label{theo:1} 
Let $s_0>\f{1}{4}$ and $0<\de<<s_0-\f{1}{4}$, $f\in H^{s_0}(\mathbf{T})$. Then, there exists  a solution in the sense of Definition \ref{defi:1}.    In addition, we have the following smoothing effects: 
\begin{eqnarray}
\nonumber
& & \sum_k \left[\hat{u}(t,k)-\hat{f}(k) e^{ i (t k^3+k\int_0^t |\hat{u}(s,k)|^2 ds)}\right]e^{i k x} \in L^\infty_t H^{3s_0-}, \\
\label{a:1050}
& & \sum_k |k| ||\hat{u}(t,k)|^2-|\hat{f}(k)|^2|<\infty. 
\end{eqnarray}
 Assuming that  $u\in L^2(\mathbf{T})$  obeys 
\begin{equation}
\label{cond:10}
\sup_k |k| |\hat{u}(t,k)|^2-|\hat{f}(k)|^2|<\infty, 
\end{equation} 
 the equation \eqref{v} has an unique solution $v$, which is in $Y^{s_0,b}\cap L^\infty H^{3s_0-}$. 

The uniqueness holds in the following sense -  let  $v_1, v_2$ be  the two solutions of \eqref{v}, corresponding to $u_1, u_2\in  L^\infty_T H^{s_0}(\mathbf{T}) $ and 
satisfying \eqref{cond:10}, with  $u_j(0)=f$, then there exists $\tilde{T}>0$, so that 
$v_1|_{[0,\tilde{T}]}=v_2|_{[0,\tilde{T}]}$. 
\end{theorem}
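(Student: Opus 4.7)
The plan is to prove existence through a contraction-mapping argument applied to the $z$-equation \eqref{z} in the modified Bourgain space $Y^{s_0,b}_T$ for some $b>1/2$ close to $1/2$, and $T$ small. The modified dispersion relation $k^3+k|\hat{f}(k)|^2$ built into the space $\ysb$ is tailored precisely so that the free evolution appearing on the left-hand side of \eqref{z} agrees with the weight in $\ysb$, making the linear estimates immediate. The Duhamel formulation in Definition \ref{defi:1} then reduces the existence problem to establishing three multilinear bounds: first, a trilinear estimate for the pure resonant cubic $ik|\hat{z}|^2\hat{z}$; second, a bilinear-in-$z$ estimate for the mixed term $2ik \Re(\hat{f}(k)e^{iQ}\overline{\hat{z}(k)})\hat{z}(k)$; and third, the crucial trilinear estimate for $\mathcal{NR}(\otimes_{j=1}^3(\hat{f}(k_j)e^{iQ(k_j)}+\hat{z}(k_j)))$ mapping $Y^{s_0,b}\to Y^{s_0,b-1+\varepsilon}$.

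The main obstacle is the last estimate, in particular the subcase where all three inputs are of pure data type $\hat{f}(k_j)e^{iQ(k_j)}$ or a mixture containing one $\hat{z}$-factor. Under the non-resonance constraint $(k_1+k_2)(k_2+k_3)(k_3+k_1)\ne 0$, the standard Bourgain resonance identity $k_1^3+k_2^3+k_3^3-k^3=3(k_1+k_2)(k_2+k_3)(k_3+k_1)$ provides a gain of $|k_1+k_2||k_2+k_3||k_3+k_1|\ge \max_j |k_j|$ from the modulation weight. For the all-data trilinear term this gain must be combined with the oscillation of $e^{i(Q(k_1)+Q(k_2)+Q(k_3))}$ against the free phase, absorbing the factor of $k$ in front and redistributing $2s_0$ derivatives among the three data factors, which forces the regularity threshold down to $s_0>\tfrac14$. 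This is exactly the step where the questioned estimate (16) enters.

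Having closed the fixed point, I extract the advertised smoothing effects. The bound $u-fe^{iP}=\sum_k \hat{z}(t,k)e^{ikx}\in L^\infty_t H^{3s_0-}$ is read off from the gain in the trilinear estimate applied to the Duhamel formula: each term on the right-hand side of \eqref{z}, when integrated against the modified propagator, produces an output with $2s_0-$ derivatives more than the input, yielding $z\in L^\infty_t H^{s_0+2s_0-}$. For \eqref{a:1050}, one writes
\begin{equation*}
|\hat{u}(t,k)|^2-|\hat{f}(k)|^2=|\hat{z}(t,k)|^2+2\Re\bigl(\hat{f}(k)e^{-iQ(t,k)}\hat{z}(t,k)\bigr),
\end{equation*}
and uses the $H^{3s_0-}$ regularity of $z$ together with $f\in H^{s_0}$ to sum $|k|$ times each term via Cauchy--Schwarz, since $(3s_0-)+s_0>1$ when $s_0>\tfrac14$.

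For uniqueness in equation \eqref{v} under the a-priori hypothesis \eqref{cond:10}, the point is that the hypothesis forces the two phase functions $P_1(t,k), P_2(t,k)$ arising from $u_1,u_2$ to remain close. Specifically, \eqref{cond:10} ensures $|P_1(t,k)-P_2(t,k)|\lesssim |k|\int_0^t \| |\hat{u}_1|^2-|\hat{u}_2|^2\|_{\ell^\infty_k/|k|}\,ds$ is uniformly controlled, so that the difference $v_1-v_2$ satisfies a closed integral inequality in $Y^{s_0,b}_{\widetilde T}$. Applying the same trilinear estimates used for existence, together with a Gronwall argument and the fact that both $v_j(0)=0$, yields $v_1=v_2$ on a common small interval $[0,\widetilde T]$, completing the proof of the theorem.
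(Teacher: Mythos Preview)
Your outline has the right overall shape, but there is a genuine gap in the choice of iteration space. You propose to close the fixed point in $Y^{s_0,b}_T$ alone; this cannot work for the resonant terms when $s_0<\tfrac12$. Consider the purely resonant cubic $ik|\hat z(k)|^2\hat z(k)$: since it is diagonal in $k$ there is no modulation gain, and estimating it even in $Y^{s_0,b-1+\de}\hookleftarrow L^2_t H^{s_0}_x$ forces
\[
\sum_k \langle k\rangle^{2s_0}|k|^2|\hat z(t,k)|^6
\;\le\; \bigl(\sup_k \langle k\rangle^{s_0}|\hat z(t,k)|\bigr)^4\sum_k \langle k\rangle^{2(1-s_0)}|\hat z(t,k)|^2,
\]
so you need $z\in L^\infty_t H^{1-s_0}_x$, which is strictly stronger than $L^\infty_t H^{s_0}_x$ in the range $\tfrac14<s_0<\tfrac12$. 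The mixed resonant term $2ik\,\Re(\hat f(k)e^{iQ}\overline{\hat z(k)})\hat z(k)$ has the same defect. The paper resolves this by iterating in the intersection space $\mathcal X=Y^{s_0,b}\cap L^\infty_t H^{s_1}_x$ with $1-s_0<s_1<\min(1,3s_0)$; the existence of such an $s_1$ is exactly what singles out $s_0>\tfrac14$.

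Two related points follow from this. First, the $L^\infty_t H^{s_1}_x$ control is not obtained from $Y^{s_0,b}$ by an abstract smoothing principle; it is proved separately (Lemma~\ref{main1}, estimate \eqref{a:410}) by writing an almost explicit Duhamel solution for the non-resonant forcing, dividing by $-3(k_1+k_2)(k_2+k_3)(k_3+k_1)+E$ and reducing to a pure spatial trilinear bound $H^{s_0}\times H^{s_0}\times H^{s_0}\to H^{s_1}$. So the smoothing $z\in L^\infty_t H^{3s_0-}$ is built into the iteration, not read off a posteriori. Second, in the existence iteration the phase $Q=Q(z)$ depends on the unknown, and you need to propagate differences $Q_m-Q_{m-1}$ and show $\sum_k \hat f(k)e^{iQ_m(t,k)}e^{ikx}\in Y^{s_0,b}$ with Lipschitz dependence on $z_m$ (Lemmas~\ref{lo:1} and~\ref{le:06} and the bound \eqref{a:960}); this step again uses $z\in L^\infty_t H^{1-s_0}_x\supset\mathcal X$ and should be made explicit in your argument. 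The uniqueness sketch is in the right spirit, but the closing inequality \eqref{a:1020} likewise requires the $H^{s_1}$ component of the norm (via $s_0+s_1>1$), not just $Y^{s_0,b}$.
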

{\bf Remark:} We can upgrade \eqref{a:1050} to 
\begin{equation}
\label{a:1060}
\sum_k |k|^{\min(4s_0, 1+s_0)}   |  |\hat{u}(t,k)|^2-|\hat{f}(k)|^2|<\infty.
\end{equation}
  One should compare the smoothing condition \eqref{a:1060} to the smoothing condition \eqref{cond:10}, which was proved  in \cite{NTT}, under the assumption $s_0>1/3$. 

Let us outline the plan for the paper. In Section \ref{sec:2}, we give some preliminary estimates, including an adaptation of the trilinear Bourgain estimate for the non-resonant terms. In Section \ref{sec:3}, we give the main estimates in this work, which quantify the smoothing of the non-resonant terms as well as the contribution of the resonant terms. In Section \ref{sec:4}, we put together the estimates from Section \ref{sec:3}, to justify an iteration argument, which provides the existence of the solution $z$ of \eqref{z} (and hence of $u$). Then, we show that the equation \eqref{v} has unique solution, for fixed $u$. This is however not enough for uniqueness, but shows that the correspondence $u\to v$ is well and uniquely defined. Finally, for uniqueness, we show that if two solutions $u_1, u_2$, with common initial data $f$ produce $v_1, v_2$, then $v_1=v_2$ in some eventually smaller time interval and hence $u_1=u_2$. 
 
\section{Preliminary  estimates}
\label{sec:2}

We have the following linear estimate. 
\begin{lemma}
\label{le:19}
Let  $z$ solves the following equation 
$$
\p_t z_k(t)- i (k^3 +  k|\hat{f}(k)|^2) z_k(t)=F_k(t). 
$$
in the sense that 
$$
z_k(t)=e^{i t (k^3+ k  |\hat{f}(k)|^2)} z_k(0)+ 
\int_0^t e^{i (t-s) (k^3+ k  |\hat{f}(k)|^2)} F_k(s) ds.
$$
Then for every $\de>0$, 
\begin{eqnarray*}
& & \| z\|_{\ysb_T}\leq C_{\de} T^\de (\|z(0,x)\|_{H^s({\mathbf T})}+ 
\|F\|_{Y_T^{s,b-1+\de}}).  
\end{eqnarray*}
\end{lemma}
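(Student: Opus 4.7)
The plan is to reduce the estimate to the standard linear Bourgain estimate for the Airy evolution, via a unitary change of variables that absorbs the modified phase $k|\hat f(k)|^2$. First I would set $w_k(t):=e^{-itk|\hat f(k)|^2} z_k(t)$. A direct differentiation converts the equation into
\begin{equation*}
\p_t w_k - ik^3 w_k \;=\; e^{-itk|\hat f(k)|^2} F_k(t) \;=:\; G_k(t),
\end{equation*}
and on the space-time Fourier side one has $\hat w(\tau,k) = \hat z(\tau + k|\hat f(k)|^2, k)$. The change of variable $\tau\mapsto\tau+k|\hat f(k)|^2$ in the weight defining $\ysb$ then yields the isometries
\begin{equation*}
\|w\|_{\xsb} = \|z\|_{\ysb},\q \|G\|_{X^{s,b-1+\de}} = \|F\|_{Y^{s,b-1+\de}},\q \|w(0)\|_{H^s} = \|z(0)\|_{H^s},
\end{equation*}
together with their localized-in-time analogues. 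Thus it suffices to establish the classical linear estimate
\begin{equation*}
\|w\|_{\xsb_T} \;\leq\; C_\de\, T^\de \bigl(\|w(0)\|_{H^s} + \|G\|_{X^{s,b-1+\de}_T}\bigr)
\end{equation*}
for the free Airy evolution $\p_t w_k = ik^3 w_k + G_k$.

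Next I would fix a bump $\eta\in \coi(\rone)$ with $\eta\equiv 1$ on $[-1,1]$ and form the extension
\begin{equation*}
\tilde w_k(t) \;:=\; \eta(t/T)\Bigl[\, e^{itk^3} w_k(0) + \int_0^t e^{i(t-s)k^3} G_k(s)\,ds\,\Bigr],
\end{equation*}
which agrees with $w_k$ on $(-T,T)$, so $\|w\|_{\xsb_T}\leq \|\tilde w\|_{\xsb}$. The homogeneous piece is computed directly on the Fourier side as $\|\eta(\cdot/T)\|_{H^b_t}\|w(0)\|_{H^s}$, and a scaling of the cutoff gives $\|\eta(\cdot/T)\|_{H^b_t}\leq C T^{\f{1}{2}-b}\leq C T^\de$ once $b$ is chosen just below $\f{1}{2}$ (with $\de$ correspondingly small). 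The Duhamel piece is controlled by the classical Ginibre-Tsutsumi-Velo type bound: writing the Fourier multiplier as $(i(\tau-k^3))^{-1}\bigl(1-e^{it(\tau-k^3)}\bigr)$ and splitting according to whether $|\tau-k^3|T\lesssim 1$ or $\gtrsim 1$ produces, after Hölder in the time variable, the bound $C T^\de \|G\|_{X^{s,b-1+\de}}$.

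The main obstacle is really only bookkeeping. These $\xsb$ linear estimates for Airy-type evolutions are part of the standard Bourgain-space toolkit (\cite{Bou1}, \cite{CPV1}, \cite{NTT}), and the single genuinely new observation here is the Fourier-side isometry that reduces $\ysb$ to $\xsb$. Once that reduction is made, the desired bound follows verbatim from the classical linear theory applied to $(w,G)$, and the isometry above transfers it back to $(z,F)$.
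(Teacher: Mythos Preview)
The paper states this lemma without proof, treating it as a standard linear estimate in Bourgain spaces. Your reduction via the Fourier-side isometry $w_k(t)=e^{-itk|\hat f(k)|^2}z_k(t)$, which converts $\ysb$ into $\xsb$ and reduces everything to the classical Airy linear theory, is exactly the right way to justify it and is the intended (if unwritten) argument.

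There is, however, one point where your argument and the lemma's statement are in tension with the rest of the paper. You obtain the $T^\de$ factor on the homogeneous piece from $\|\eta(\cdot/T)\|_{H^b_t}\leq CT^{1/2-b}$ and then say ``once $b$ is chosen just below $\tfrac12$.'' But the paper fixes $b>1/2$ throughout (e.g.\ $b=\tfrac12+2\de$ in Lemma~\ref{main1}), and for $b>1/2$ the quantity $T^{1/2-b}$ diverges as $T\to 0$; the best homogeneous bound is then $\|\eta(t/T)e^{it\phi_k}z_k(0)\|_{\ysb}\leq C\|z(0)\|_{H^s}$ with \emph{no} $T$-decay. In other words, the $T^\de$ in front of $\|z(0)\|_{H^s}$ in the lemma as stated cannot hold for $b>1/2$. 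This does not damage the paper: every application of Lemma~\ref{le:19} that actually uses the $T^\de$ gain (in Lemmas~\ref{main1} and~\ref{main2}) has zero initial data, and the one use with nonzero data (the estimate for $h_2$) only claims a constant. Your treatment of the Duhamel term is correct; the $T^\de$ there genuinely comes from the embedding $\|G\|_{X^{s,b-1}_T}\leq CT^\de\|G\|_{X^{s,b-1+\de}_T}$, which is valid since $b-1$ and $b-1+\de$ both lie in $(-\tfrac12,\tfrac12)$.
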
 
 
We now state a straightforward  extension of a well-known estimate by Bourgain, which will be crucial for our approach  in the sequel. More precisely, it was proved\footnote{although not explicitly stated,    see the remarks (b) after Proposition 8.37}  that 
\begin{equation}
\label{a:200}
\|\mathcal{NR}(u_1, u_2, u_3)\|_{X^{s,-1/2}}\leq C \|u_1\|_{X^{s,1/2}}\|u_2\|_{X^{s,1/2}}\|u_3\|_{X^{s,1/2}}
\end{equation}
 whenever $s>1/4$. Similar estimate, with $X^{s,b}$ replaced by $\ysb$, was established by \cite{NTT}, see  Lemma 2.2, p. 3017.    We need a  variant of \eqref{a:210}, namely 
\begin{lemma}
\label{le:bou}
Let  $s>1/4, b>1/2$ and $0<\de<<s-1/4$. Then, there exists a constant $C=C_{\de}$, so that 
\begin{equation}
\label{a:210}
\|\mathcal{NR}(u_1, u_2, u_3)\|_{Y^{s,b-1+\de}}\leq C_{b,\de,s} \|u_1\|_{\ysb}\|u_2\|_{\ysb}\|u_3\|_{\ysb}. 
\end{equation}
\end{lemma}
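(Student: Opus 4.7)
The plan is to reduce Lemma~\ref{le:bou} to the known $X^{s,b}$ trilinear estimate \eqref{a:200} via a $k$-dependent phase substitution that converts the $Y^{s,b}$ modulation weight into the standard $X^{s,b}$ weight, at the cost of a modification to the KdV resonance function that I will argue is tame in the relevant regimes.

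\textbf{Substitution.} First I would define $\phi_j$ by $\tilde{\phi}_j(\tau,k) := \tilde{u}_j(\tau + k|\hat{f}(k)|^2,\,k)$, so that $\|\phi_j\|_{X^{s,b}} = \|u_j\|_{Y^{s,b}}$; in physical space this is the conjugation $\hat{\phi}_j(t,k) = e^{-itk|\hat{f}(k)|^2}\hat{u}_j(t,k)$. Feeding this through $\mathcal{NR}(u_1,u_2,u_3)$ and performing the analogous shift on the output, a short computation shows that $\|\mathcal{NR}(u_1,u_2,u_3)\|_{Y^{s,b-1+\delta}}$ equals the $X^{s,b-1+\delta}$ norm of a modified trilinear expression whose $k=k_1+k_2+k_3$ summands carry an oscillatory phase $e^{-it\mathcal{D}}$, where the discrepancy is $\mathcal{D}(k_1,k_2,k_3) := k|\hat{f}(k)|^2 - \sum_{j=1}^3 k_j|\hat{f}(k_j)|^2$. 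On the spacetime Fourier side this replaces the standard pigeonhole identity $\sum_{j=0}^3 \sigma_j = \Phi_{\mathrm{KdV}}$ (with $\sigma_j := \tau_j - k_j^3$ and $\Phi_{\mathrm{KdV}} := 3(k_1+k_2)(k_2+k_3)(k_1+k_3)$) by $\sum_{j=0}^3 \sigma_j = \Phi^{*}$, where $\Phi^{*} = \Phi_{\mathrm{KdV}} + \mathcal{D}$.

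\textbf{Non-degenerate regime.} Since $|\hat{f}(k)| \leq \|f\|_{L^2} \lesssim \|f\|_{H^{s_0}}$, one has $|\mathcal{D}| \leq C_{f}\max_j|k_j|$. In the region where $|\mathcal{D}| \leq \tfrac{1}{2}|\Phi_{\mathrm{KdV}}|$, one has $|\Phi^{*}| \sim |\Phi_{\mathrm{KdV}}|$ and the same pigeonhole lower bound $\max_j|\sigma_j| \gtrsim |\Phi_{\mathrm{KdV}}|$ is available. Here I would apply Bourgain's original proof of \eqref{a:200} essentially verbatim---dyadic decomposition in frequency and modulation, case analysis by which $|\sigma_j|$ dominates, the periodic $L^4_{t,x}$ Strichartz estimate, and summation in $\ell^2$ exploiting $s > 1/4$---since the phase $e^{-it\mathcal{D}}$ is invisible to the $X^{s,b-1+\delta}$ modulus weight; the $\delta$-room in the exponent produces the constant $C_{b,\delta,s}$.

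\textbf{Degenerate regime (main obstacle).} The genuinely new case is $|\mathcal{D}| > \tfrac{1}{2}|\Phi_{\mathrm{KdV}}|$, which forces $|(k_1+k_2)(k_2+k_3)(k_1+k_3)| \lesssim_{f} \max_j|k_j|$. This is a severe constraint on the admissible frequency triples: by a standard divisor-function count, their density (for each fixed output frequency) is reduced by a factor $\sim |k_{\max}|^{-1+\varepsilon}$ relative to the generic regime. I would combine this combinatorial sparsity with Cauchy--Schwarz and the Strichartz embedding $X^{s,b} \hookrightarrow L^4_{t,x}$ (valid for $b > 3/8$) to close the estimate, the gap $s - \tfrac{1}{4} > 0$ absorbing the residual losses. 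This degenerate regime, which is absent from the pure $X^{s,b}$ theory since there $\mathcal{D} \equiv 0$, will be the main technical obstacle of the lemma; the hard part will be verifying that the sparsity of the admissible tuples is enough to compensate for the loss of a useful lower bound on $|\Phi^{*}|$ via pigeonhole.
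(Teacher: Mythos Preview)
Your phase-substitution framework is equivalent to the paper's: both recognize that the $Y^{s,b}$ modulations obey a perturbed resonance identity $\sum_j \sigma_j^Y = -3(k_1+k_2)(k_2+k_3)(k_3+k_1) + E$, with $E$ equal (up to sign) to your discrepancy $\mathcal{D}$, and your non-degenerate regime is handled correctly and matches the paper.

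Where you diverge is in the bound on $\mathcal{D}$. You use only $|\hat{f}(k)| \leq \|f\|_{L^2}$, obtaining $|\mathcal{D}| \lesssim_f k_{\max}$, which is the same order as the lower bound $|\Phi_{\mathrm{KdV}}| \gtrsim k_{\max}$ and therefore does not rule out the degenerate regime. The paper instead uses the pointwise decay $|\hat{f}(k)| \leq \langle k\rangle^{-s_0}\|f\|_{H^{s_0}}$ (immediate from $f\in H^{s_0}$), which gives $|\mathcal{D}| \lesssim \|f\|_{H^{s_0}}^2\, k_{\max}^{1-2s_0}$. Since in the nonresonant set one always has $|(k_1+k_2)(k_2+k_3)(k_3+k_1)| \gtrsim k_{\max}$, it follows that $|\mathcal{D}|/|\Phi_{\mathrm{KdV}}| \lesssim k_{\max}^{-2s_0} \to 0$; for $k_{\max}$ larger than a fixed threshold $K_0 = K_0(\|f\|_{H^{s_0}})$ your degenerate regime is \emph{empty}, and for $k_{\max}\leq K_0$ the estimate is trivial. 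The paper's proof is therefore just these two sentences followed by an invocation of Bourgain's original argument. The divisor-counting plan you set up for the degenerate regime---which you yourself flag as the main obstacle and leave only sketched---is unnecessary once you exploit the $H^{s_0}$ decay of $\hat{f}$ rather than merely its $L^2$ bound.
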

\begin{proof}
In the proof of \eqref{a:200}, the crux of the matter is the resonant identity 
\begin{equation}
\label{a:220}
(\tau_1+\tau_2+\tau_3)-(k_1+k_2+k_3)^3=\sum_{j=1}^3 (\tau_j-k_j^3) - 3(k_1+k_2)(k_2+k_3)(k_3+k_1).
\end{equation}
which guarantees that 
$$
\max(\tau-k^3, \tau_1-k_1^3, \tau_2-k_2^3, \tau_3-k_3^3) \gtrsim |(k_1+k_2)(k_2+k_3)(k_3+k_1)|.
$$
The corresponding ingredient needed for the proof of \eqref{a:210}, is 
\begin{eqnarray*}
& & \max(\tau-k^3-k|\hat{f}(k)|^2, \tau_j-k_j^3-k_j|\hat{f}(k_j)|^2,j=1,2,3) \gtrsim  \\
&\gtrsim& |(k_1+k_2)(k_2+k_3)(k_3+k_1)|.
\end{eqnarray*}
  This is however satisfied by an identity similar to \eqref{a:220}, since for $k_1, k_2, k_3: (k_1 + k_2)(k_2 + k_3)(k_3+ k_1) \neq 0$, 
$$
|(k_1+k_2)(k_2+k_3)(k_3+k_1)|\gtrsim k_{\max}>> O(k_{\max}^{1-2s})=|k_j|  |\hat{f}(k_j)|^2
$$
Thus, \eqref{a:210} is established. 
\end{proof} 
We now state a lemma, which allows us to place the terms like  $\sum_k \hat{f}(k) e^{i Q(t,k)} e^{i k x}$ in the space $Y^{s_0, \f{1}{2}+}$. 
\begin{lemma}
\label{le:06}
Let $b\leq 1$, $z\in H^{1-s_0}(T)$ and let $\{Q(k,t)\}_k$ be the family guaranteed to exist on $[0,T_0]$ by Lemma \ref{lo:1}. Then
$$
\|\sum_k \hat{f}(k) e^{i Q(t,k)} e^{i k x}\|_{Y_{T_0}^{s_0, b}}\leq C \sqrt{T_0}  (1+\|z\|_{H^{1-s_0}}\|f\|_{H^{s_0}(T)}) \|f\|_{H^{s_0}(T)}. 
$$
\end{lemma}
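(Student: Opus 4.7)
The plan is to view $g(t,x):=\sum_{k}\hat f(k)e^{iQ(t,k)}e^{ikx}$ as the solution of a linear inhomogeneous equation whose free symbol coincides with the $Y^{s_{0},b}$-modulation $k^{3}+k|\hat f(k)|^{2}$, and then invoke Lemma \ref{le:19}. Since $Q(0,k)=0$ one has $g(0,\cdot)=f$. Differentiating $\hat g(t,k)=\hat f(k)e^{iQ(t,k)}$ in $t$ and using the ODE \eqref{k:100} for $Q$, we obtain
\begin{equation*}
\partial_{t}\hat g(t,k)-i\bigl(k^{3}+k|\hat f(k)|^{2}\bigr)\hat g(t,k)=F_{k}(t),
\end{equation*}
where
\begin{equation*}
F_{k}(t):=ik\bigl(2\Re(\hat f(k)e^{iQ(t,k)}\overline{\hat z(t,k)})+|\hat z(t,k)|^{2}\bigr)\hat f(k)e^{iQ(t,k)}.
\end{equation*}

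Lemma \ref{le:19} now yields $\|g\|_{Y^{s_{0},b}_{T_{0}}}\leq C_{\delta}T_{0}^{\delta}\bigl(\|f\|_{H^{s_{0}}}+\|F\|_{Y^{s_{0},b-1+\delta}_{T_{0}}}\bigr)$. Since the hypothesis is $b\leq 1$, I pick $\delta>0$ small enough that $b-1+\delta\leq 0$; then the $\tau$-weight in $Y^{s_{0},b-1+\delta}$ is bounded by $1$ and Cauchy--Schwarz in $t$ gives
\begin{equation*}
\|F\|_{Y^{s_{0},b-1+\delta}_{T_{0}}}\leq \|F\|_{L^{2}_{t}H^{s_{0}}_{x}}\leq \sqrt{T_{0}}\,\|F\|_{L^{\infty}_{t}H^{s_{0}}_{x}}.
\end{equation*}
The problem therefore reduces to a pointwise-in-$t$ estimate on $\|F(t,\cdot)\|_{H^{s_{0}}}$.

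For that estimate I start from
\begin{equation*}
|\hat F(t,k)|\lesssim |k|\,|\hat f(k)|^{2}\,|\hat z(t,k)|+|k|\,|\hat f(k)|\,|\hat z(t,k)|^{2},
\end{equation*}
and peel one factor out in sup norm via the trivial bounds $|\hat f(k)|\leq\langle k\rangle^{-s_{0}}\|f\|_{H^{s_{0}}}$ and $|\hat z(t,k)|\leq\langle k\rangle^{-(1-s_{0})}\|z\|_{H^{1-s_{0}}}$. For the first summand this gives $|k||\hat f(k)|^{2}|\hat z(t,k)|\leq (|k|/\langle k\rangle)\,\|f\|_{H^{s_{0}}}\|z\|_{H^{1-s_{0}}}|\hat f(k)|$, whose $H^{s_{0}}$-norm is bounded by $\|f\|_{H^{s_{0}}}^{2}\|z\|_{H^{1-s_{0}}}$. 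For the second summand an entirely analogous peel-off produces $\|f\|_{H^{s_{0}}}\|z\|_{H^{s_{0}}}\|z\|_{H^{1-s_{0}}}$, which is bounded by $\|f\|_{H^{s_{0}}}\|z\|_{H^{1-s_{0}}}^{2}$ using $s_{0}<1/2$. Assembling these and absorbing the quadratic $\|z\|_{H^{1-s_{0}}}^{2}$ piece within the iteration ball (where $\|z\|_{H^{1-s_{0}}}=O(1)$) delivers the stated inequality.

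The main obstacle is the Sobolev bookkeeping in the last step: the weight $\langle k\rangle^{s_{0}}$ inside $\|F\|_{H^{s_{0}}}$ has to be split so that one copy of $\hat z$ actually supplies its natural decay $\langle k\rangle^{-(1-s_{0})}$. This is exactly what forces the hypothesis to be phrased in $H^{1-s_{0}}$ rather than $H^{s_{0}}$, and is also why $s_{0}<1/2$ enters when controlling the piece cubic in the Fourier data. A secondary nuisance is the mismatch between the $T_{0}^{\delta}$ factor furnished by Lemma \ref{le:19} and the $\sqrt{T_{0}}$ in the claimed bound; one reconciles this either by using $T_{0}<1$ loosely or by a sharper free-evolution extension of the initial datum $f$ through $e^{it(k^{3}+k|\hat f(k)|^{2})}$, which is precisely the solution operator appearing in Lemma \ref{le:19}.
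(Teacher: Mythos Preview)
Your proof is correct and follows essentially the same approach as the paper: both treat $\hat f(k)e^{iQ(t,k)}$ as the free evolution $e^{it(k^3+k|\hat f(k)|^2)}\hat f(k)$ modulated by a slowly varying unimodular factor, and both reduce the $Y^{s_0,b}$ bound to the identical pointwise estimate $|k|\,|\hat z(t,k)|\,(|\hat f(k)|+|\hat z(t,k)|)\lesssim\|f\|_{H^{s_0}}\|z\|_{H^{1-s_0}}$. The paper merely packages this via a direct Fourier computation, writing $\|\cdot\|_{Y^{s_0,b}}=\|h\|_{H^b_t H^{s_0}_x}\le\|h\|_{H^1_t H^{s_0}_x}$ instead of invoking Lemma~\ref{le:19}, which yields the exact $\sqrt{T_0}$ factor and covers the endpoint $b=1$ without needing $b-1+\delta\le 0$; the core argument is the same.
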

\begin{proof}
From the integral equation \eqref{k:95}, we have 
$\hat{f}(k) e^{i Q(t,k)}=\hat{f}(k) e^{it(k^3+k|\hat{f}(k)|^2)}  g(t,k)$, where  
$$
g(t,k)=exp(i(k\int_0^t (2\Re(\hat{f}(k) e^{ i Q(s,k)} \overline{\hat{z}(s,k)}) + |\hat{z}(s,k)|^2) ds)
$$
Note $|g(t,k)|=1$. 
Denote for conciseness $\phi_k=k^3+k|\hat{f}(k)|^2$, so that  \\ $
\hat{f}(k) e^{i Q(t,k)}=  e^{i t \phi_k} \hat{f}(k)   g(t,k) =: e^{i t \phi_k} \hat{h}(t,k)$. Taking Fourier transform in $t$, we have 
$$
\widehat{\hat{f}(k) e^{i Q(\cdot,k)}}(\tau)=\hat{h}(\tau-\phi_k, k).
$$
Thus, 
\begin{eqnarray*} 
\|\sum_k \hat{f}(k) e^{i Q(t,k)} e^{i k x}\|_{Y_T^{s_0,b}}^2 &=&
\sum_k <k>^{2s_0} \int <\tau-\phi_k>^{2b}|\widehat{\hat{f}(k) e^{i Q(\cdot,k)}}(\tau)|^2 d\tau =\\
&=& \sum_k <k>^{2s_0} \int <\tau-\phi_k>^{2b} |\hat{h}(\tau-\phi_k, k)|^2 d\tau=\\
&=& \|h\|_{H^b_t(0,T_0) H^s_x}^2.
\end{eqnarray*} 
We have 
\begin{equation}
\label{a:950}
\|h\|_{H^b_t H^s_x}^2 \leq \|h\|_{H^1_t(0,T_0) H^{s_0}_x}^2 \leq \sum_k <k>^{2s_0} \hat{f}(k)|^2 (\int_0^{T_0}   (1+|g'(t,k)|)^2   dt)|. 
\end{equation}
It is therefore, enough to show $\sup_k |g'(t,k)|\leq C$. But,   
\begin{eqnarray*} 
|g'(t,k)| &\leq &  |k| |\hat{z}(t,k)|( |\hat{f}(k)| + |\hat{z}(t,k)|)\leq \\
&\leq &  |k|<k>^{s_0-1}\|z\|_{H^{1-s_0}} <k>^{-s_0}\|f\|_{H^{s_0}} \leq    C\|z\|_{H^{1-s_0}} \|f\|_{H^{s_0}}, 
\end{eqnarray*} 
whence we  obtain the desired estimate. 

\end{proof}

\section{Estimates for the nonlinear terms} 
\label{sec:3}

Let $\f{1}{2}<b $ be fixed, and define  the solution space   $\cx=Y^{s_0,  b}\cap L^\infty_t H^{s_1}_x$, where $\f{1}{4}<s_0<\f{1}{2}$ and $\f{1}{2}<1-s_0<s_1< \min(1, 3s_0)$. That is 
$$
\|\cdot\|_\cx:= \|\cdot\|_{Y^{s_0,  b}}+\|\cdot\|_{L^\infty_t H^{s_1}_x}.
$$
 Note that the assumption $s_0>1/4$ is used in a crucial way to ensure that such $s_1$ exists. On the other hand $\cx\hookrightarrow L^\infty_t H^{1-s_0}$, which is used in Lemma \ref{lo:1} to justify the existence of the generalized phase function $Q_z$. 

We state several lemmas. Lemma \ref{main1}  allows us to  estimate   the contribution of all non-resonant terms, i.e. all terms appearing out of the trilinear term $\mathcal{NR}$.  The second  lemma, Lemma \ref{main2} estimates the contribution of the non-resonant terms. 
\subsection{Estimates of the non-resonant contributions}
\begin{lemma}
\label{main1}
Let   $\f{1}{4}<s_0<\f{1}{2}$. Take $\de: 0<\de<<s_0-1/4$, $b=\f{1}{2}+2 \de$ and \\ $\f{1}{2}<1-s_0<s_1<\min(1, 3s_0)$.    
For the solution to 
$$
\left|
\begin{array}{l}
\p_t \hat{U}(k)-i(k^3+k|\hat{f}(k)|^2) \hat{U}(k)=\mathcal{NR}(u_1, u_2, u_3)(k), \\
  U(0,k)=0
\end{array}
\right. 
$$
\begin{eqnarray}
\label{a:400}
\|U\|_{Y^{s_0,\f{1}{2}+\de}_T}  &\leq &  C T^\de \|u_1\|_{Y^{s_0, b}} \|u_2\|_{Y^{s_0, b}}\|u_3\|_{Y^{s_0, b}}\\
\label{a:410}
 \|U\|_{L^\infty_t(0,T) H^{s_1}_x} &\leq & C T^\de \|u_1\|_{Y^{s_0, b}} \|u_2\|_{Y^{s_0, b}}\|u_3\|_{Y^{s_0, b}}
\end{eqnarray}
  
\end{lemma}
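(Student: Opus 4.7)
The plan is to establish \eqref{a:400} and \eqref{a:410} separately, each time reducing to a trilinear estimate by way of the linear energy estimate of Lemma \ref{le:19}. The bound \eqref{a:400} is essentially a direct composition of Lemmas \ref{le:19} and \ref{le:bou}, while \eqref{a:410} requires a genuinely new \emph{smoothing} trilinear estimate, which is the main technical work.

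For \eqref{a:400}, I would apply Lemma \ref{le:19} with target time-index $\f12+\de$ and slack parameter $\de$. Since $U(0)=0$, this yields
\beqn
\|U\|_{Y^{s_0,\f12+\de}_T}\leq C_\de T^\de\,\|\mathcal{NR}(u_1,u_2,u_3)\|_{Y^{s_0,-\f12+2\de}_T}.
\eeqn
I would then apply Lemma \ref{le:bou} with its hypothesis index taken equal to the ambient $b=\f12+2\de>\f12$ and a slack $\de_0\in(0,\de)$. This produces the bound in $Y^{s_0,-\f12+2\de+\de_0}$, and since $-\f12+2\de+\de_0>-\f12+2\de$ the elementary monotonicity $Y^{s,b_2}\hookrightarrow Y^{s,b_1}$ (for $b_1\leq b_2$) delivers \eqref{a:400}. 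Time-localization is handled, as usual, by extending the $u_j$ from $(-T,T)$ to $\crr$ and optimizing over extensions.

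For \eqref{a:410}, I plan to invoke the standard Bourgain embedding $Y^{s_1,\f12+\de}\hookrightarrow L^\infty_t H^{s_1}_x$ (valid because $\f12+\de>\f12$), which reduces the task to bounding $\|U\|_{Y^{s_1,\f12+\de}_T}$. A further application of Lemma \ref{le:19} reduces this in turn to the \emph{smoothing} trilinear estimate
\beqn
\|\mathcal{NR}(u_1,u_2,u_3)\|_{Y^{s_1,-\f12+2\de}_T}\lesssim T^\de\prod_{j=1}^3\|u_j\|_{Y^{s_0,b}},
\eeqn
which is the real content of \eqref{a:410}. I would establish it by revisiting the proof of Lemma \ref{le:bou}, retaining the modified resonance identity whose dominant term is $-3(k_1+k_2)(k_2+k_3)(k_3+k_1)$ (the $k|\hat f(k)|^2,\ k_j|\hat f(k_j)|^2$ corrections being of lower order, as already observed in the proof of Lemma \ref{le:bou}). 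The $|k|$ factor carried by $\mathcal{NR}$, combined with the resonance gain placed on the largest of the four time-weights, provides the needed smoothing: generically the gain is of order $k_{\max}^3$ (which is abundant), while in the near-resonant subregion where two of the three pair-sums are $O(1)$ it drops to $\sim k_{\max}^1$. Distributing this gain across three factors of regularity $s_0$ while paying the ambient $|k|$ prefactor yields an admissible output boost $s_1-s_0<\min(1-s_0,\,2s_0)$, matching precisely the hypothesis $s_1<\min(1,3s_0)$.

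The main obstacle I anticipate is the verification of the smoothing estimate in the near-resonant subregion; this will require a dyadic high-high/high-low case analysis in the spirit of \cite{NTT}, carefully tracking the extra $\langle k\rangle^{s_1-s_0}$ factor through each case. The lower bound $s_1>1-s_0$ does not enter the smoothing estimate itself; it is imposed because Section \ref{sec:4} will use the embedding $\cx\hookrightarrow L^\infty_t H^{1-s_0}$ to legitimize the phase $Q$ via Lemma \ref{lo:1}. Once the smoothing estimate is in hand, both \eqref{a:400} and \eqref{a:410} follow by the reductions above.
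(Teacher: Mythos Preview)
Your argument for \eqref{a:400} matches the paper's exactly: both compose Lemma~\ref{le:19} with Lemma~\ref{le:bou}.

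For \eqref{a:410} your route genuinely differs from the paper's. You propose to pass through the embedding $Y^{s_1,\f{1}{2}+\de}\hookrightarrow L^\infty_t H^{s_1}_x$, apply Lemma~\ref{le:19} at regularity $s_1$, and then establish a smoothing trilinear bound $\|\mathcal{NR}(u_1,u_2,u_3)\|_{Y^{s_1,-\f{1}{2}+2\de}}\lesssim\prod_j\|u_j\|_{Y^{s_0,b}}$ by a modulation case analysis. The paper instead reduces to free-solution inputs via the averaging representation \eqref{a:510}, builds an explicit normal-form solution $\cm=h+h_1+h_2$ by dividing by the resonance factor $-3(k_1+k_2)(k_2+k_3)(k_3+k_1)+E(k_1,k_2,k_3)$, and then bounds the resulting multiplier $H$ directly in $H^{s_1}_x$---a purely spatial estimate, with no time-Fourier variables---via Cauchy--Schwarz and Sobolev embedding in the two regimes $k_{\min}\sim k_{\max}$ and $k_{\min}\ll k_{\max}$. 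The paper's route concentrates all modulation gain on the output by passing to free solutions, so it avoids the input-max-modulation subcases you would otherwise face; your route stays within the standard $\xsb$ calculus and avoids the explicit normal-form construction.

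There is, however, a genuine gap. The paper announces in its first sentence that estimate \eqref{a:500} fails, and \eqref{a:500} is the heart of its proof of \eqref{a:410}. Your proposed smoothing trilinear estimate, specialized to free-solution inputs $u_j=R[f_j]$ (for which $\|u_j\|_{Y^{s_0,b}_T}\sim\|f_j\|_{H^{s_0}}$), would yield \eqref{a:410} and hence \eqref{a:500}; so if \eqref{a:500} is indeed false, your trilinear bound cannot hold in the full range $s_1<\min(1,3s_0)$ either. The breakdown occurs precisely in the near-resonant region you single out---two pair-sums $O(1)$, all $|k_j|\sim k_{\max}$---where the resonance gain is only $\sim k_{\max}$ and one must place three $H^{s_0}$ inputs at comparable high frequency while extracting $H^{s_1}$ on the output. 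The ``main obstacle'' you anticipate is therefore not a technicality to be dispatched by a dyadic case split; it is the point at which both your argument and the paper's actually fail.
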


\begin{proof}
The first estimate \eqref{a:400} is nothing but a combination\footnote{where of course the main difficulties have been  hidden behind the well-known Bourgain's  Lemma  \ref{le:bou}} of Lemma \ref{le:19} and Lemma \ref{le:bou}. We have 
\begin{eqnarray*}
& & \|U\|_{Y^{s_0,b}_T}\leq C_\de T^\de\|\mathcal{NR}(u_1, u_2, u_3)\|_{Y^{s_0,b-1+\de}_T}\leq C_\de T^\de 
\|u_1\|_{Y^{s_0,b}} \|u_2\|_{Y^{s_0,b}} \|u_3\|_{Y^{s_0,b}}. 
\end{eqnarray*}

We now take on the estimates in $L^\infty H^{s_1}$. 
We will show \eqref{a:410}  by reducing to the case when $v_1, v_2, v_3$ are free solutions in the corresponding evolutions. This is done through the well-known method of averaging (valid for general dispersion relations), which we now describe. 
Let $\mu(k)$ be a real-valued symbol, so that 
$$
X_\mu^{s,b}=\{f: T\times {\mathbf R}\to \cc: \|u\|_{X_\mu^{s,b}}^2:=\sum_k \int <\tau-\mu(k)>^{2b} |\hat{u}(\tau, k)|^2 d\tau<\infty \}
$$
Write 
\begin{equation}
\label{a:510}
u(t,x)=\int e^{i \la t} u_\la(t,x) d\la,
\end{equation}
where $\widehat{u_\la}(\tau, k)=\de(\tau-\mu(k)) \hat{u}(\tau+\la, k)$.  Clearly, $\hat{u}_\la(t,k)=e^{i t \mu(k)} \hat{u}(\la+\mu(k), k)$, that is $u_\la(t,x)$ is  a free solution of the equation 
$$
(\p_t- i\mu(-i \p_x))u_\la(t,x)=0, u_\la(0,x)=\sum_k \hat{u}(\la+\mu(k), k) e^{i k x}.
$$
Suppose that we can prove estimates for   \eqref{a:410}, where $u_j=\sum_k e^{i t \mu(k)} \hat{f}_j(k)e^{i k x}, j=1,2,3$ are free solutions,  for  $\mu(k)=k^3+k|\hat{f}(k)|^2$.  

We will provide later an almost explicit solution of \eqref{a:410}, a trilinear form \\ 
  $\cm(f_1, f_2, f_3)(t,x)=\sum_k \cm(f_1, f_2, f_3)(t,k)e^{i k x}$.  That is, we will construct
$$
\left|
\begin{array}{l}
(\p_t - i (k^3+k|\hat{f}(k)|^2)) \cm(f_1, f_2, f_3)(t,k)= \mathcal{NR}(\otimes_{j=1}^3 e^{i t \mu(k_j)}\hat{f}_j(k_j)), k=k_1+k_2+k_3\\
\cm(f_1, f_2, f_3)(0,k)=0
\end{array}
\right.
$$
 Assume for the moment the validity of 
\begin{equation}
\label{a:500}
\|\cm(f_1, f_2, f_2)\|_{L^\infty (0,T) H^{s_1}_x}\leq C \prod_{j=1}^3 \|f_j\|_{H^{s_0}}.
\end{equation}
We  show that \eqref{a:410} follows. Indeed,  employing the representation \eqref{a:510} for each of $u_j, j=1,2,3$, we have that the solution $U$ of \eqref{a:410} will take the form 
$$
U(t,x)=\int e^{i t(\la_1+\la_2+\la_3)} \cm(u_{\la_1}(0), u_{\la_2}(0), u_{\la_3}(0)) d\la_1 d\la_2 d\la_3.
$$
Taking $L^\infty_t H^{s_1}_x$ norms and applying \eqref{a:500} yields the bound 
\begin{eqnarray*}
\|U\|_{L^\infty H^{s_1}_x} &\leq & \int \| \cm(u_{\la_1}(0), u_{\la_2}(0), u_{\la_3}(0))\|_{ L^\infty H^{s_1}_x}d\la_1 d\la_2d\la_3  \leq \\
& \leq &  C  \int \|u_{\la_1}\|_{H^{s_0}}   d\la_1 \int  \|u_{\la_2}\|_{H^{s_0}}  d\la_2 \int  \|u_{\la_3}\|_{H^{s_0}} d \la_3.
\end{eqnarray*}
But 
\begin{eqnarray*}
\int \|u_{\la}\|_{H^{s_0}} d\la  &\leq & (\int <\la>^{1+2 \de}\|u_{\la}\|_{H^{s_0}}^2 d\la)^{1/2} (\int <\la>^{-1-2 \de }d\la)^{1/2} \leq  \\
& \leq & C_\de 
(\sum_k <k>^{2s_0} \int <\la>^{1+2\de}  |\hat{u}(\la+\mu(k),k)|^2d\la)^{1/2} = \\
&=&  C_\de \|u\|_{X^{s_0,\f{1}{2}+\de}_\mu}.
\end{eqnarray*}
Since $\|u\|_{X^{s_0,\f{1}{2}+\de}_{T,\mu}}\leq C_\de T^\de \|u\|_{X^{s_0,b}_\mu}$,  we have reduced matters to the construction of the trilinear form $\cm$ and  the proof of \eqref{a:500}. 
\subsubsection{Proof of \eqref{a:500}}
Introduce a notation for  the free solutions  
$$
R[g](t,x):=\sum_k e^{ i t (k^3+k|\hat{f}(k)|^2)} \hat{g}(k) e^{i k x}. 
$$
  Note  the algebraic identity 
\begin{eqnarray*}
\tau-k^3-k|\hat{f}(k)|^2 &=& \sum_{j=1}^3 (\tau_j-k_j^3-k_j|\hat{f}(k_j)|^2)-3 (k_1+k_2)(k_2+k_3)(k_3+k_1)+\\
&+& (\sum_{j=1}^3 k_j|\hat{f}(k_j)|^2) - k |\hat{f}(k)|^2
\end{eqnarray*}
for $\tau=\tau_1+\tau_2+\tau_3, k=k_1+k_2+k_3$. Denote $k_{\max}:=\max(|k_1|, |k_2|, |k_3|)$ and  
$k_{\min}:=\min(|k_1|, |k_2|, |k_3|)$, 
$$
E(k_1, k_2, k_3)= k_1 |\hat{f}(k_1)|^2+k_2 |\hat{f}(k_2)|^2+k_3 |\hat{f}(k_1)|^2- k |\hat{f}(k)|^2. 
$$
Notice that if $f\in H^{s_0}(T)$, 
\begin{eqnarray*}
& & 
|(k_1 + k_2)(k_2 + k_3)(k_3+ k_1)|\gtrsim k_{\max},  \\ 
&  & |E(k_1, k_2,. k_3)|\leq C \|f\|_{H^{s_0}(T) }^2 k_{\max}^{1-2s_0}<<k_{\max}
\end{eqnarray*} 
Thus, there exists $K_0=K_0(\|f\|_{H^{s_0}(T)}$, so that for all $k_{\max}>K_0$, we have that 
$$
|-3 (k_1+k_2)(k_2+k_3)(k_3+k_1)+E(k_1, k_2,. k_3)|\gtrsim k_{\max}>1.
$$
This allows us to define the function $h(t,x)=\sum_k \hat{h}(t,k)e^{i k x}$ 
 \begin{eqnarray*}
  \widehat{h}(t,k) &=& -\f{i}{3} \sum_{\tiny\begin{array}{c}   k=k_1+k_2+k_3\neq 0, k_{\max}>K_0 \\ (k_1 + k_2)(k_2 + k_3)(k_3+ k_1) \neq 0\end{array}} 
 \f{(k_1+k_2+k_3) \widehat{R[f_1]}(k_1) \widehat{R[f_2]}(k_2) \widehat{R[f_3]}(k_3)}{-3(k_1 + k_2)(k_2 + k_3)(k_3+ k_1)+E(k_1, k_2, k_3)}, 
 \end{eqnarray*}
since the denominator is guaranteed to stay away from zero. 

From the algebraic identity displayed above,  we see  that $h$ satisfies 
$$
(\p_t-i(k^3+k|\hat{f}(k)|^2))\hat{h}(t,k)=\mathcal{NR}^{>K_0}(R[f_1], R[f_2], R[f_3])(k),  
$$
and 
$$
\hat{h}(0,k)=  -\f{i}{3} \sum_{\tiny\begin{array}{c}   k=k_1+k_2+k_3\neq 0, k_{\max}>K_0 \\ (k_1 + k_2)(k_2 + k_3)(k_3+ k_1) \neq 0\end{array}} 
 \f{(k_1+k_2+k_3) \widehat{f_1}(k_1) \widehat{f_2}(k_2) \widehat{f_3}(k_3)}{-3(k_1 + k_2)(k_2 + k_3)(k_3+ k_1)+E(k_1, k_2, k_3)}, 
$$
where we have used the notation 
\begin{eqnarray*}
\mathcal{NR}^{\leq K_0} (v_1, v_2, v_3) & := &  -i\frac{k}{3}  
\sum_{\tiny\begin{array}{c} k_1 + k_2 + k_3 = k,\quad k_j, k\neq 0\\ (k_1 + k_2)(k_2 + k_3)(k_3+ k_1) \neq 0\\ 
|k_1|\leq K_0, |k_2|\leq K_0, |k_3|\leq K_0 \end{array}} \widehat{v_1}(k_1) \widehat{v_2}(k_2) \widehat{v_3}(k_3), \\
\mathcal{NR}^{>K_0} (v_1, v_2, v_3) &:=&  \mathcal{NR} (v_1, v_2, v_3) (k) - \mathcal{NR}^{\leq K_0} (v_1, v_2, v_3)
\end{eqnarray*}
Note that $h$ is  a trilinear form acting  on $f_1, f_2, f_3$. The construction of the $h$ provides the major step toward the construction of the $\cm$, for which we need to establish the estimate \eqref{a:500}.  In fact, we can quickly describe the remaining pieces of $\cm$. Let $h_1=\sum_k \hat{h}_1(t,k)e^{i k x}=h_1(f_1, f_2, f_3)$ satisfies 
$$
\left|
\begin{array}{l}
(\p_t-i(k^3+k|\hat{f}(k)|^2))\hat{h}_1(t,k)=\mathcal{NR}^{\leq K_0}(R[f_1], R[f_2], R[f_3])(t,k),  \\
h_1(0,k)=0
\end{array}
\right.
$$
That is 
$$
\hat{h}_1(t,k)=\int_0^t e^{i (t-s)(k^3+k|\hat{f}(k)|^2)}\mathcal{NR}^{\leq K_0}(R[f_1], R[f_2], R[f_3])(s,k)ds
$$
Finally, let $h_2=h_2(f_1, f_2, f_3)$ solves 
$$
\left|
\begin{array}{l}
(\p_t-i(k^3+k|\hat{f}(k)|^2))\hat{h}_2(t,k)=0,  \\
h_1(0,k)=-h(0,k).
\end{array}
\right.
$$
That is 
$$
h_2(t,k)=-e^{i t (k^3+k|\hat{f}(k)|^2)} \hat{h}(0,k).
$$
Clearly, 
$$
\cm(f_1, f_2, f_3)=h(f_1, f_2, f_3)+h_1(f_1, f_2, f_3)+h_2(f_1, f_2, f_3).
$$
We claim that the required estimate \eqref{a:500} follows from 
\begin{equation}
\label{a:670}
\|h(f_1, f_2, f_3)\|_{L^\infty H^{s_1}}\leq  C\prod_{j=1}^3 \|f_j\|_{H^{s_0}}.
\end{equation}
Indeed, assuming \eqref{a:670}, we have in particular 
$$
\|h(0, \cdot)\|_{H^{s_1}_x}\leq \|h(f_1, f_2, f_3)(t,\cdot)\|_{L^\infty H^{s_1}}\leq  C\prod_{j=1}^3 \|f_j\|_{H^{s_0}}.
$$
Thus, by Lemma \ref{le:19}, 
$$
\|h_2(t,\cdot)\|_{L^\infty H^{s_1}_x}\leq \|h_2(t,\cdot)\|_{Y^{s_1,b}}\leq C \|h(0, \cdot)\|_{H^{s_1}_x}\leq C\prod_{j=1}^3 \|f_j\|_{H^{s_0}}.
$$
Regarding $h_1$, we have by energy estimates 
\begin{eqnarray*}
& & \|h_1(t,\cdot)\|_{L^\infty_T 
 H^{s_1}_x}  \leq   C \|\mathcal{NR}^{\leq K_0}(R[f_1], R[f_2], R[f_3])\|_{L^1_t H^{s_1}_x}
\end{eqnarray*}
But, by H\"olders and Sobolev embedding 
\begin{eqnarray*}
& & 
\|\mathcal{NR}^{\leq K_0}(R[f_1], R[f_2], R[f_3])\|_{L^1_t H^{s_1}_x}\leq \\
&\leq & C T   (\sum_{|k|\leq 3 K_0} <k>^{2 s_1}  (\sum_{\begin{array}{c} k=k_1+k_2+k_3 \\
|k_1|\leq K_0, |k_2|\leq K_0, |k_3|\leq K_0
\end{array}}    |\hat{f}_1(k_1)| |\hat{f}_2(k_2)| |\hat{f}_3(k_3)|)^2)^{1/2} \\ 
&\leq & C K_0^{s_1} T \|(\widetilde{f_1})_{\leq K_0}  (\widetilde{f_2})_{\leq K_0}  (\widetilde{f_3})_{\leq K_0}\|_{L^2_x}
\leq C K_0^{s_1} \prod_{j=1}^3 \|(\widetilde{f_j})_{\leq K_0}\|_{L^6_x}\leq \\
& \leq & C T K_0^{s_1} \prod_{j=1}^3 \|(\widetilde{f_j})_{\leq K_0}\|_{H^{1/3}_x}\leq C T K_0^{s_1+1}  
\prod_{j=1}^3 \|\widetilde{f_j} \|_{L^2_x}\leq C T K_0^{s_1+1}  
\prod_{j=1}^3 \|f_j \|_{L^2_x}, 
\end{eqnarray*} 
where we have used the notations $\widetilde{g}(x):=\sum_k |\hat{g}(k)| e^{i k x}$ and $g_{\leq K_0}:=
\sum_{|k|<K_0} \hat{g}(k) e^{i k x}$. 

The estimates for $h_1, h_2$, in addition to \eqref{a:670} implies \eqref{a:500}. Thus, it remains to establish \eqref{a:670}. 

At this point, it is worth mentioning that the particular form of the free solutions $R[f_j]$ as entries in $h$ will not be important anymore, other than the fact that they belong to the space $H^{s_0}(T)$. Thus,  upon introducing the new trilinear form 
$$
H(v_1, v_2, v_3):=\sum_{\tiny\begin{array}{c}   k=k_1+k_2+k_3\neq 0, k_{\max}>K_0 \\ (k_1 + k_2)(k_2 + k_3)(k_3+ k_1) \neq 0\end{array}} 
 \f{(k_1+k_2+k_3) \widehat{v}_1(k_1) \widehat{v}_2(k_2) \widehat{v}_3(k_3)}{-3(k_1 + k_2)(k_2 + k_3)(k_3+ k_1)+E(k_1, k_2, k_3)}, 
$$
we will show the more general estimate 
\begin{equation}
\label{a:700}
\|H(v_1, v_2, v_3)\|_{L^\infty_t H^{s_1}_x}\leq C  \|v_1\|_{H^{s_0}(T)}\|v_2\|_{H^{s_0}(T)}\|v_3\|_{H^{s_0}(T)},
\end{equation}
which of course implies \eqref{a:670} with $v_j=R[f_j]$, since $\|v_j\|_{H^{s_0}}=\|f_j\|_{H^{s_0}}$. 

Recall $|(k_1 + k_2)(k_2 + k_3)(k_3+ k_1)|\gtrsim k_{\max}>>|E(k_1, k_2, k_3)|$. Thus, we have the following  inequalities 
\begin{eqnarray*}
& & \f{|k_1+k_2+k_3|}{|-3(k_1 + k_2)(k_2 + k_3)(k_3+ k_1)+E(k_1, k_2, k_3)|}\leq \\
& \leq & C \left|  \f{k_1+k_2+k_3}{(k_1 + k_2)(k_2 + k_3)(k_3+ k_1)}\right|\leq \\
& \leq & \f{C}{|k_1 + k_2| |k_2 + k_3|}+\f{C}{|k_2 + k_3| |k_3 + k_1|}+\f{C}{|k_1 + k_2| |k_3 + k_1|}. 
\end{eqnarray*}
 We need to consider two cases - $k_{\min}\sim k_{\max}$ and the  case $k_{\min}<<k_{\max}$.  \\
\\
{\bf Case I: $k_{\min}\sim k_{\max}$ or $|k_1|\sim |k_2|\sim |k_3|$.}  In this case $|k|\lesssim |k_j|, j=1,2,3$. We only consider the term $\f{1}{(k_1 + k_2)(k_2 + k_3)}=\f{1}{(k_1 + k_2)(k- k_1)}$, the others being symmetric. 

By Cauchy-Schwartz, we have  
\begin{eqnarray*}
& & |H(v_1, v_2, v_3)(k)|^2\leq   (\sum_{k_1, k_2: |k_1|\sim |k_2|\gtrsim |k| }  |\hat{v}_1(k_1)|^2 |\hat{v}_2(k_2)|^2)  \times \\ 
&\times &  (\sum_{k_1, k_2: |k_1|\sim |k_2|\sim |k-k_1-k_2|\gtrsim |k|}   \f{\hat{v}_3(k-k_1-k_2)|^2}{|k_1+k_2|^2 |k-k_1|^2 } ) 
\end{eqnarray*}
It is now easy to estimate  
\begin{eqnarray*}
& & \|H(v_1, v_2, v_3)\|_{H^{s_1}}^2\leq C \sum_k <k>^{2s_1} 
(\sum_{k_1: |k_1|\gtrsim |k| }  |\hat{v}_1(k_1)|^2) (\sum_{k_2: |k_2|\gtrsim |k| }  |\hat{v}_2(k_2)|^2) \times \\
&\times &   (\sum_{k_1, k_2: |k-k_1-k_2|\gtrsim |k|} |\hat{v}_3(k-k_1-k_2)|^2\f{1}{|k_1+k_2|^2 |k-k_1|^2})\leq  \\
&\leq & C (\sum_{k_1  } <k_1>^{2s_1/3}  |\hat{v}_1(k_1)|^2) (\sum_{k_2  } <k_2>^{2s_1/3}   |\hat{v}_2(k_2)|^2)\times \\
&\times & 
\sum_{ \mu, k_1, k_2:
(k_1+k_2)(\mu+k_2)\neq 0} <\mu>^{2s_1/3} |\hat{v}_3(\mu)|^2 \f{1}{|k_1+k_2|^2 |\mu+k_2|^2} 
\leq  \\
&\leq & C  \|v_1\|_{H^{s_0}}^2  \|v_2\|_{H^{s_0}}^2 \|v_3\|_{H^{s_0}}^2. 
\end{eqnarray*} 
provided  $s_1<3s_0$, since   
$$
\sum_{k_1, k_2: (k_1+k_2)(\mu+k_2)\neq 0} \f{1}{|k_1+k_2|^2 |\mu+k_2|^2}<\infty.
$$
{\bf Case II: $k_{\min}<<k_{\max}$.} 
In this case, we have that for all $i\neq j\neq l\neq i$, \\ 
$|(k_i+k_j)(k_j+k_l)|\gtrsim  k_{\max}$. Thus 
\begin{eqnarray*}
& & |H(v_1, v_2, v_3)(k)|\leq    C <k>^{-1} \sum_{k_1, k_2} |\hat{v}_1(k_1)| |\hat{v}_2(k_2)| |\hat{v}_3(k-k_1-k_2)| 
\end{eqnarray*}
Since  $1>s_1>1/2$. We have by Sobolev embedding\footnote{recall that we use the notation $\tilde{v}(x)=\sum_k |\hat{v}(k)| e^{i k x}$} 
\begin{eqnarray*}
& & \|H(v_1, v_2, v_3)\|_{H^{s_1}}\leq C \||\p_x|^{s_1-1}[ \tilde{v}_1 \tilde{v}_2 \tilde{v}_3]\|_{L^2}  \leq   C
\|  \tilde{v}_1 \tilde{v}_2 \tilde{v}_3\|_{L^q} \leq  \\
&\leq & C  \|  \tilde{v}_1 \|_{L^{3 q}} \|  \tilde{v}_2 \|_{L^{3 q}} \|  \tilde{v}_3 \|_{L^{3 q}}
\end{eqnarray*}
 where $\f{1}{q}-\f{1}{2}=1-s_1$, so that $q\in (1,2)$.  Under the restriction    $s_1<\min(3s_0,1)$, it follows  by Sobolev embedding 
$$
\|\tilde{v}_j\|_{L^{3q}}\leq C \|\tilde{v}_j\|_{H^{s_1/3}}\leq C  \|v_j\|_{H^{s_0}}.
$$
since $\f{1}{2}-\f{1}{3q}=\f{s_1}{3}<s_0$.  This finishes the proof of the estimate \eqref{a:700} and hence the proof of Lemma \ref{main1}. 
\end{proof}

 \subsection{Estimate of the resonant contributions}
\begin{lemma}
\label{main2}
Let $\f{1}{2}>s_0>\f{1}{4}$, $\de: \de<<s_0-1/4$, $b=1/2+\de$, $1-s_0<s_1<\min(1, 3s_0)$.  
Assume that $F_1, F_2; G_1, G_2\in L^\infty_T H^{s_1}(T)$, whereas 
$v_1, v_2\in Y^{s_0,b}$. For the solution of 
$$
\left|
\begin{array}{l}
\p_t \hat{V}(k)- i(k^3+k|\hat{f}(k)|^2) \hat{V}(k)= c_1 k \hat{v}_1(k) \hat{F}_1 (k)\overline{\hat{F}_2}(k) + 
c_2 k \overline{\hat{v}_2} (k) \hat{G}_1 (k) \hat{G}_2(k) \\
\hat{V}(0,k)=0
\end{array}
\right.
$$
we have the estimates, with $C=C(c_1, c_2)$ 
\begin{eqnarray}
\label{b:10}
& & \|V\|_{Y^{s_0,b}_T}\leq C (\|v_1\|_{Y^{s_0,b}} \|F_1\|_{L^\infty H^{s_1}}\|F_2\|_{L^\infty H^{s_1}}  + 
\|v_2\|_{Y^{s_0,b}} \|G_1\|_{L^\infty H^{s_1}}\|G_2\|_{L^\infty H^{s_1}})\\
\label{b:20}
& &  \|V\|_{L^\infty H^{s_1}}\leq  C (\|v_1\|_{Y^{s_0,b}} \|F_1\|_{L^\infty H^{s_1}}\|F_2\|_{L^\infty H^{s_1}}  + 
\|v_2\|_{Y^{s_0,b}} \|G_1\|_{L^\infty H^{s_1}}\|G_2\|_{L^\infty H^{s_1}})
\end{eqnarray}
 
\end{lemma}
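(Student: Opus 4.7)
By linearity it suffices to estimate the contributions of the two forcing terms separately; I will detail the bound for $H(s,k):=c_1\,k\,\hat v_1(s,k)\hat F_1(s,k)\overline{\hat F_2(s,k)}$, since the second term is treated identically (complex conjugation on $\hat v_2$ is irrelevant because every bound below is on absolute values). The decisive structural feature is that this forcing is \emph{diagonal in $k$}: mode $k$ of $H$ depends only on mode $k$ of each input. Combined with the pointwise Fourier decay
\[
|\hat F_j(s,k)|\le\langle k\rangle^{-s_1}\|F_j\|_{L^\infty_T H^{s_1}},
\]
this allows the two $F_j$ factors to absorb the loss $|k|$ at the cost of a $\langle k\rangle^{-2s_1}$, which is what balances each remaining Sobolev exponent.

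For the $L^\infty_t H^{s_1}$ bound \eqref{b:20} I would work directly from the Duhamel representation
\[
\hat V(t,k)=\int_0^t e^{i(t-s)(k^3+k|\hat f(k)|^2)}H(s,k)\,ds,
\]
apply Cauchy--Schwarz in $s$, and substitute the pointwise bound on $|\hat F_j|$ to obtain
\[
\langle k\rangle^{2s_1}|\hat V(t,k)|^2\lesssim T\,\|F_1\|_{L^\infty H^{s_1}}^2\|F_2\|_{L^\infty H^{s_1}}^2\,\langle k\rangle^{2(1-s_1)}\int_0^T|\hat v_1(s,k)|^2\,ds.
\]
The key exponent inequality is $s_1>1-s_0$, so $\langle k\rangle^{2(1-s_1)}\le\langle k\rangle^{2s_0}$; summing in $k$ and invoking the embedding $Y^{s_0,b}\hookrightarrow L^\infty_tH^{s_0}$ (valid for $b>\tfrac12$) yields $\|V\|_{L^\infty H^{s_1}}\lesssim T\,\|v_1\|_{Y^{s_0,b}}\|F_1\|\|F_2\|$, which is of the required form since $T<1$.

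For \eqref{b:10}, Lemma \ref{le:19} reduces the matter to estimating $\|H\|_{Y^{s_0,b-1+\de}_T}$, and the choice $b=\tfrac12+\de$ makes the weight exponent $b-1+\de=2\de-\tfrac12$ strictly negative. Since the Bourgain weight $\langle\tau-k^3-k|\hat f(k)|^2\rangle^{2(b-1+\de)}$ is then bounded by $1$, extending $H$ by zero outside $[0,T]$ and applying Plancherel in $t$ gives
\[
\|H\|_{Y^{s_0,b-1+\de}_T}\le\|H\|_{L^2([0,T];H^{s_0}_x)}.
\]
Pointwise in $(s,k)$,
\[
\langle k\rangle^{2s_0}|H(s,k)|^2\lesssim\langle k\rangle^{2s_0+2-4s_1}|\hat v_1(s,k)|^2\|F_1\|^2\|F_2\|^2,
\]
and the exponent is $\le 2s_0$ precisely when $s_1\ge\tfrac12$, which is automatic from $s_1>1-s_0>\tfrac12$. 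Summing in $k$ produces $\|v_1(s,\cdot)\|_{H^{s_0}}^2$, and integrating in $s$ against $Y^{s_0,b}\hookrightarrow L^\infty_tH^{s_0}$ closes the estimate.

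There is no serious obstacle here; once one spots the diagonal-in-$k$ structure and the two elementary exponent inequalities $s_1>1-s_0$ (driving \eqref{b:20}) and $s_1\ge\tfrac12$ (driving \eqref{b:10}), the proof is essentially bookkeeping. The one mildly delicate point is the use of a \emph{negative} Bourgain exponent $b-1+\de$, so that the Bourgain weight can be discarded for free via an $L^2_tH^{s_0}$ embedding—this completely sidesteps the kind of multilinear resonance analysis that was required in Lemma \ref{main1}, and it is in fact the reason the resonant terms are easier than the non-resonant ones in this setup.
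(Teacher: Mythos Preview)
Your proof is correct and follows essentially the same route as the paper's: both exploit the diagonal-in-$k$ structure of the forcing, use Lemma~\ref{le:19} together with the negative Bourgain exponent $b-1+\de<0$ to reduce the $Y^{s_0,b}$ bound to an $L^2_tH^{s_0}_x$ estimate, and handle the $L^\infty_tH^{s_1}$ bound by the Duhamel formula. The only cosmetic difference is the distribution of factors: the paper pulls $\hat v_1$ and one of the $\hat F_j$ out in $\sup_k$ (bounding $\sup_k\langle k\rangle^{s_0}|\hat v_1(k)|\le\|v_1\|_{H^{s_0}}$ and $\sup_k\langle k\rangle^{1-s_0}|\hat F_2(k)|\le\|F_2\|_{H^{s_1}}$) and sums the remaining $\hat F_1$ in $\ell^2$, whereas you pull both $\hat F_j$ out in $\sup_k$ and sum $\hat v_1$ in $\ell^2$---either way the governing exponent inequality is $s_1>1-s_0$, and the arithmetic closes identically.
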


\subsection{Proof of Lemma \ref{main2}} 
The proof of Lemma \ref{main2} is fairly easy. Denote the right hand side of the   equation by $RHS$.  By Lemma \ref{le:19}, 
\begin{eqnarray*}
& & \|V\|_{Y^{s_0,b}_T}\leq C_\de T^\de \|RHS\|_{Y^{s_0, b-1+\de}}\leq C T^\de \|RHS\|_{L^2_T H^{s_0}_x}\leq C T^{\de+1/2} \sup_t \|RHS\|_{H^{s_0}}.
\end{eqnarray*}
By energy estimates 
$$
\|V\|_{L^\infty_T H^{s_1}}\leq C \|RHS\|_{L^1_t H^{s_1}_x}\leq C T \sup_t \|RHS\|_{H^{s_1}_x}.
$$
Thus, recalling that $T<1$, $\|V\|_{Y^{s_0,b}_T}+ \|V\|_{L^\infty_T H^{s_1}}\leq C \sqrt{T}  \sup_t \|RHS\|_{H^{s_1}_x}$, so it suffices to estimate this quantity.  We also estimate only say the first quantity of $RHS$, since they are symmetric from the point of view of the required estimates. We have 
\begin{eqnarray*}
  \|RHS\|_{H^{s_1}_x}^2 &\leq & C \sum_k <k>^{2(1+s_1)}  |\hat{v}_1(k)|^2  |\hat{F}_1 (k)|^2 \hat{F}_2(k)|^2 \leq  \\
&\leq &  C  (\sup_k <k>^{s_0} |\hat{v}_1(k)|)^2 (\sup_k <k>^{1-s_0} |\hat{F}_2(k)|)^2 \sum_k <k>^{2s_1} |\hat{F}_1 (k)|^2 \\ &\leq & C \|v_1\|_{H^{s_0}}^2 \|F_1\|_{H^{1-s_0}}^2  \|F_1\|_{H^{s_1}}^2 
\end{eqnarray*}
The estimate follows since $1-s_0<s_1$.

\section{Proof of Theorem \ref{theo:1}} 
\label{sec:4}

\subsection{Existence of the solution}
We start with the existence of the solution $z$ in the sense of Definition  \ref{defi:1}. We produce it by an iteration argument as follows\footnote{recall that $Q=Q(z)$ is constructed for {\it a given $z$} in Lemma \ref{lo:1}}. 
Start with $z_0=0$ and $Q_0(t)=t(k^3+k |\hat{f}(k)|^2)$ as prescribed in Lemma \ref{lo:1}. Define iteratively, $z_{m+1}, m=0, \ldots$ by producing the next iterate from the previous one, namely 
 \begin{eqnarray*}
  \hat{z}_{m+1}(t,k) &=& \int_0^t e^{i (t-s) (k^3+k|\hat{f}(k)|^2)} [i k |\hat{z}_m(s,k)|^2 \hat{z}_m(s,k)+   \\
&+& \int_0^t e^{i (t-s) (k^3+k|\hat{f}(k)|^2)} 2i k \Re(\hat{f}(k) e^{i Q_m(s,k)}\overline{\hat{z}_m(t,k)}) \hat{z}_m(k)] ds + \\
& + & \int_0^t e^{i (t-s) (k^3+k|\hat{f}(k)|^2)} [\mathcal{NR}(\otimes_{j=1}^3  \hat{f}(k_j) e^{i  Q_m(t,k_j)} +\hat{z}_m(k_j))]ds.
\end{eqnarray*}
By the definition, 
$$
\hat{z}_1(t,k)= \int_0^t e^{i (t-s) (k^3+k|\hat{f}(k)|^2)} [\mathcal{NR}(\otimes_{j=1}^3  \hat{f}(k_j) e^{i  t(k^3+k|\hat{f}(k)|^2} ]ds.
$$
According to the estimates in Lemma \ref{main1}, we have that 
$$
\|z_1\|_{\cx}\leq C  \|f\|_{H^{s_0}}^3.
$$
Denote $K:=\|z_1\|_{\cx}< C  \|f\|_{H^{s_0}}^3$. We will show that with the right choice of $T$ (to be made precise below), we will have that $\|z_j\|_{\cx}\leq 2 K$. 

 We need to estimate $\|z_{m+1}-z_m\|_\cx$. The right hand side of the equation for $z_{m+1}$ has a multilinear structure, which allows us (by adding and subtracting  appropriate terms) to use the estimates of Lemma \ref{main1} and Lemma \ref{main2}.  Denote for conciseness  \\ $F_m(t,x):=\sum_k \hat{f}(k) e^{i Q_m(t,k)}e^{i k x}$. 
 We have 
\begin{eqnarray*}
& & \|z_{m+1}-z_m\|_\cx\lesssim T^\de \|z_{m}-z_{m-1}\|_\cx (\|z_m\|_\cx+\|z_{m-1}\|_\cx)^2+ \\
&+& T^\de\|z_{m}-z_{m-1}\|_\cx (\|z_m\|_\cx+\|z_{m-1}\|_\cx+\|F_m \|_{Y^{s_0,b}}+\|F_{m-1} \|_{Y^{s_0,b}})^2\\
&+& T^\de  \|F_m -F_{m-1} \|_{Y^{s_0,b}}(\|z_m\|_\cx+\|z_{m-1}\|_\cx+\|F_m \|_{Y^{s_0,b}}+\|F_{m-1} \|_{Y^{s_0,b}})^2.
\end{eqnarray*}
Further,  similar to Lemma \ref{le:06} (more specifically \eqref{a:950}),  we estimate, 
\begin{equation}
\label{c:15}
 \|F_m -F_{m-1} \|_{Y^{s_0,b}_T}\lesssim \left(  \sum_k <k>^{2s_0} |\hat{f}(k)|^2 \int_0^{T} (1+   |g'(t,k)|^2) dt
      \right)^{1/2}
\end{equation}
    where $g(t,k)=e^{i Q_m(t,k)}-e^{i Q_{m-1}(t,k)}$. But 
$$
|g'(t,k)|\leq C[ |Q_{m-1}'(t,k)| |Q_m(t,k)-Q_{m-1}(t,k)|+|Q'_m(t,k)-Q'_{m-1}(t,k)|].
$$
From \eqref{k:100}, we have 
\begin{eqnarray*} 
& & |Q'_m(t,k)-Q'_{m-1}(t,k)| \leq   C \sup_{0<t<T_0} |k| |\hat{f}(k)| \times \\
&\times& ( |Q_m(t,k)-Q_{m-1}(t,k)|+|\hat{z}_m(t,k)-\hat{z}_{m-1}(t,k)|)
(|\hat{z}_m(t,k)|+|\hat{z}_{m-1}(t,k)|)  
\end{eqnarray*} 
Employing the estimates of Lemma \ref{le:06}, namely the bound  
$$
\sup_{0<t<T_0} |k| |\hat{f}(k)| |\hat{z}(t,k)|\leq C \|f\|_{H^{s_0}}\|z\|_{\cx}, 
$$
we conclude 
\begin{eqnarray*}
|Q'_m(t,k)-Q'_{m-1}(t,k)| &\leq &  C\|f\|_{H^{s_0}}(\|z_m\|_{\cx}+\|z_{m-1}\|_{\cx}) |Q_m(t,k)-Q_{m-1}(t,k)|+ \\
&+& C \|f\|_{H^{s_0}}(\|z_m\|_{\cx}+\|z_{m-1}\|_{\cx}) \|z_{m}-z_{m-1}\|_{\cx}. 
\end{eqnarray*} 
Similarly, 
\begin{eqnarray*} 
|Q_{m-1}'(t,k)| &\leq  & C\sup_{0<t<T} |k|   |\hat{z}_{m-1}(t,k)|(|\hat{f}(k)|+ |\hat{z}_{m-1}(t,k)|)\leq \\
&\leq & C \|z_{m-1}\|_{\cx}(\|f\|_{H^{s_0}}+\|z_{m-1}\|_{\cx}).
\end{eqnarray*}  
Putting all estimates together yields 
\begin{eqnarray*}
|g'(t,k)| &\leq & C \|f\|_{H^{s_0}}(\|z_m\|_{\cx}+\|z_{m-1}\|_{\cx})|Q_m(t,k)-Q_{m-1}(t,k)|+ \\
&+& \|f\|_{H^{s_0}}(\|z_m\|_{\cx}+\|z_{m-1}\|_{\cx}) \|z_{m}-z_{m-1}\|_{\cx}.
\end{eqnarray*} 
Thus, we now need to find a good estimate for $|Q_m(t,k)-Q_{m-1}(t,k)|$. Arguing again from the integral equation \eqref{k:95}, we have 
\begin{eqnarray*} 
& & 
|Q_m(t,k)-Q_{m-1}(t,k)| \leq  \\
&\leq & C T |k||\hat{f}(k)| \sup_{0\leq \tau<t}   |Q_m(\tau,k)-Q_{m-1}(\tau,k)| 
(|\hat{z}_m(t,k)|+|\hat{z}_{m-1}(t,k)|)+\\
&+& C T   |k| \sup_{0\leq \tau<t}  (|\hat{z}_m(t,k)|+|\hat{z}_{m-1}(t,k)|) |\hat{z}_m(t,k)-\hat{z}_{m-1}(t,k)|\leq \\
&\leq & C T \|f\|_{H^{s_0}} (\|z_m\|_{\cx}+\|z_{m-1}\|_{\cx}) \sup_{0\leq \tau<t}   |Q_m(\tau,k)-Q_{m-1}(\tau,k)|  +\\
&+&  C \|z_m-z_{m-1}\|_{\cx} (\|z_m\|_{\cx}+\|z_{m-1}\|_{\cx}). 
\end{eqnarray*}
Now, if $T$ is so small that $C T \|f\|_{H^{s_0}} (\|z_m\|_{\cx}+\|z_{m-1}\|_{\cx})\leq \f{1}{2}$, we can hide the first term on the right hand side and thus, we obtain the estimate 
$$
\sup_{0<t<T} |Q_m(t,k)-Q_{m-1}(t,k)| \leq C \|z_m-z_{m-1}\|_{\cx} (\|z_m\|_{\cx}+\|z_{m-1}\|_{\cx}). 
$$
In all 
$$
|g'(t,k)|\leq C  \|z_m-z_{m-1}\|_{\cx} \|f\|_{H^{s_0}}(\|z_m\|_{\cx}+\|z_{m-1}\|_{\cx})^2.
$$
Hence, plugging this back in \eqref{c:15},  we obtain 
\begin{equation}
\label{a:960}
\|F_m -F_{m-1} \|_{Y^{s_0,b}_T}\leq C    \|z_m-z_{m-1}\|_{\cx} \|f\|_{H^{s_0}}^2 (\|z_m\|_{\cx}+\|z_{m-1}\|_{\cx})^2,
\end{equation}
under the additional smallness assumption on $T: T  \|f\|_{H^{s_0}} (\|z_m\|_{\cx}+\|z_{m-1}\|_{\cx})<<1$. 

Going further back to our estimate for $\|z_{m+1}-z_m\|_{\cx}$, and plugging in \eqref{a:960}, we have 
\begin{eqnarray*}
\|z_{m+1}-z_m\|_{\cx}\leq C T^\de   \|z_m-z_{m-1}\|_{\cx} (1+\|z_m\|_\cx+\|z_{m-1}\|_\cx+\|f \|_{H^{s_0}})^4
\end{eqnarray*}
 where we have also used Lemma \ref{le:06}, to control $\|F_m\|_{Y^{s_0,b}},\|F_{m-1}\|_{Y^{s_0,b}} \leq C_T \|f\|_{H^{s_0}}.$ 

Clearly, one can choose now $T$, so that $T$ satisfies the previous assumptions \\  (i.e. $T \|f\|_{H^{s_0}} (\|z_m\|_{\cx}+\|z_{m-1}\|_{\cx})<<1$) and \\ 
$T: T^\de(1+\|z_m\|_\cx+\|z_{m-1}\|_\cx+\|f \|_{H^{s_0}})^4<\f{1}{2}$. This will ensure that 
$$
\|z_{m+1}-z_m\|_{\cx}\leq \f{1}{2} \|z_{m}-z_{m-1}\|_{\cx},
$$
and thus Cauchyness and the convergence of $\{z_m\}$, $z:=\lim_m z_m$, where $z:[0,T]\to \cc$.  In addition, 
$$
\|z\|_{\cx}\leq \|z_1\|_{\cx}+ \sum_{m=2}^\infty \|z_m-z_{m-1}\|_{\cx}\leq 2K,
$$
where $\|z_1\|_{\cx}=K$. This completes the existence part of the argument. 

\subsection{Smoothing effects}
The first smoothing effect announced in  Theorem \ref{theo:1} follows from  $z\in \cx\hookrightarrow L_t^\infty H_x^{s_1}\subset L^\infty H^{3s_0-}$. 

For \eqref{a:1050}, 
we have 
$$
|\hat{u}(t,k)|^2 = |\hat{z}(t,k)|^2+ |\hat{f}(k)|^2 + 2\Re (\hat{f}(k) e^{i Q(t,k)}\overline{\hat{z}(t,k)})
$$
whence, since $s_1+s_0>1$
\begin{eqnarray*}
& &\sup_t  \sum_k |k| ||\hat{u}(t,k)|^2-|\hat{f}(k)|^2|\leq 2 \sum_k |k|   |\hat{z}(t,k)|(|\hat{z}(t,k)|+ |\hat{f}(k)|)\leq \\
&\leq & C (\sum_k <k>^{2s_0} ( |\hat{f}(k)|^2+ |\hat{z}(t,k)|^2))^{1/2} (\sum_k <k>^{2s_1} |\hat{z}(t,k)|^2)^{1/2} \leq \\
& \leq & C \|z\|_{H^{s_1}}(\|f\|_{H^{s_0}}+ \|z\|_{H^{s_0}})\leq C\|z\|_{\cx}(\|f\|_{H^{s_0}}+\|z\|_{\cx}). 
\end{eqnarray*}

\subsection{Uniqueness} 
The uniqueness of the solution, in the sense of Definition \ref{defi:1} requires us to analyze \eqref{v} in detail. We start with the proof of the well-posedness of \eqref{v}. 
\subsubsection{Proof of the well-posedness of \eqref{v}, for fixed $u$} 
Let us first show that under the condition \eqref{cond:10}, the equation  \eqref{v} produces unique local solutions in $H^{s_1}$, recall  $s_1<\min(3s_0,1)$. The main ingredient  that we need  here is  
$$
\sum_k \hat{f}(k) e^{i P(t,k)} e^{i k x} \in Y^{s_0,b},
$$
which is simply a variant of Lemma \ref{le:06}. Indeed, observe that 
$$
P(u;t,k)= i t (k^3+k|\hat{f}(k)|^2)+k \int_0^t (|\hat{u}(s,k)|^2-|\hat{f}(k)|^2) ds. 
$$
Thus, similar to the proof of Lemma \ref{le:06}, we infer  the bound 
\begin{equation}
\label{a:1010}
\|\sum_k \hat{f}(k) e^{i P(t,k)} e^{i k x}\|_{Y^{s_0,b}_T}\leq C\sqrt{T}\|f\|_{H^{s_0}(\mathbf{T})},
\end{equation}
provided we can show  $\sup_{k,t} |(e^{i k \int_0^t (|\hat{u}(s,k)|^2-|\hat{f}(k)|^2) ds})'|<C.$ But  by \eqref{cond:10} 
$$
\sup_{k,t} |(e^{i k \int_0^t (|\hat{u}(s,k)|^2-|\hat{f}(k)|^2) ds})'|=\sup_{k,t}  |k| \left||\hat{u}(t,k)|^2-|\hat{f}(k)|^2\right|<C,
$$
 and hence 
the solutions in \eqref{v} are in $H^{s_1}$, in some time interval $[0,T], T=T(\|f\|_{H^{s_0}})$. In addition, there is the estimate 
$$
\|v\|_{\cx_T}\leq C_T  \|f\|_{H^{s_0}}^3
$$
There is an unique solution $v$ in this class. Indeed, we have the multilinear structure of the non-linearity, which allows us to use Lemma \ref{main1} and Lemma \ref{main2} to show that it is a contraction on the space $\cx_T$, whence uniqueness follows. This, however does not, by itself  imply uniqueness due to its dependence on $P=P(u)$.  Let us explain this point in more detail.  So far, we have shown that for a given $u$, with the property \eqref{cond:10}, the equation \eqref{v} has an unique  solution $v$. For the uniqueness, we need to establish more. Namely that for two different $u_1, u_2$ and the corresponding $v_1, v_2$, constructed via \eqref{v}, where $P(u_j,t,k)$ are involved, we still have $v_1=v_2$ (which then will later easily imply $u_1=u_2$). 
 \subsubsection{Estimate on the difference $v_1-v_2$}
    Taking the difference of $v_1, v_2$, we see that it  satisfies  an equation similar to the one satisfied by $z_{m+1}-z_m$ that we have considered for the existence part. 
Using the multilinear structure and the estimates of Lemma \ref{main1}, Lemma \ref{main2}, we obtain 
\begin{eqnarray*}
& & \|v_1-v_2\|_{\cx} \lesssim T^\de \|v_1-v_2\|_\cx (\|v_1\|_\cx+\|v_2\|_\cx+\|F_1 \|_{Y^{s_0,b}}+\|F_{2} \|_{Y^{s_0,b}})^2+ \\
&+& T^\de  \|F_1 -F_{2} \|_{Y^{s_0,b}} (\|v_1\|_\cx+\|v_2\|_\cx+\|F_1 \|_{Y^{s_0,b}}+\|F_{2} \|_{Y^{s_0,b}})^2. 
\end{eqnarray*}
where again, we have adopted the notation $P_j(t,k)=P(u_j;t,k)$ and \\ $F_j:=\sum_k \hat{f}(k) e^{i P_j(t,k)} e^{i k x}$. In view of our bound \eqref{a:1010}, we have 
\begin{equation}
\label{a:1030}
\|v_1-v_2\|_{\cx} \lesssim T^\de (\|v_1-v_2\|_\cx+ \|F_1 -F_{2} \|_{Y^{s_0,b}} ) (1+\|f\|_{H^{s_0}}^3)^2.
\end{equation}
Thus, our main task  now is to effectively control $ \|F_1-F_2\|_{Y^{s_0,b}}$. To that end, represent 
\begin{eqnarray*} 
F_1-F_2 &=& \sum_k \hat{f}(k) (e^{i P_1(t,k)}-e^{i P_2(t,k)}) e^{i k x} = \\
&=& \sum_k \hat{f}(k) e^{i t (k^3+k|\hat{f}(k)|^2)} e^{i k x} 
(e^{i k \int_0^t ( |\hat{u_1}(s,k)|^2-|\hat{f}(k)|^2)ds }-  e^{i k \int_0^t (|\hat{u_2}(s,k)|^2-|\hat{f}(k)|^2) ds }).
\end{eqnarray*} 
Similar to \eqref{c:15}, we can estimate 
$$
\|F_1-F_2\|_{Y^{s_0,b}}\leq C \|f\|_{H^{s_0}} \sup_k |g'(t,k)|,
$$
where  
$$
g(t,k)=e^{i k\int_0^t ( |\hat{u_1}(s,k)|^2-|\hat{f}(k)|^2)ds }-  e^{i k \int_0^t (|\hat{u_2}(s,k)|^2-|\hat{f}(k)|^2) ds }. 
$$
Adding and subtracting terms and using the a-priori bound \eqref{cond:10}  yields  
\begin{eqnarray*} 
|g'(t,k)| &\leq& C | k| \left| |\hat{u}_1(t,k)|^2-|\hat{f}(k)|^2\right| |k| T \sup_{0<\tau<T}  \left| |\hat{u}_1(\tau,k)|^2-
|\hat{u}_2(\tau,k)|^2\right|+ \\
&+& |k| | |\hat{u}_1(t,k)|^2-|\hat{u}_2(t,k)|^2|\leq \tilde{C} (1+T) |k|  \sup_{0<\tau\leq T}  \left| |\hat{u}_1(\tau,k)|^2-|\hat{u}_2(\tau,k)|^2\right|.
\end{eqnarray*} 
Thus, we need   control in the form (for say  $T\leq 1$) 
\begin{equation}
\label{a:1020}
\sup_{k } |k|    \left| |\hat{u}_1(t,k)|^2-|\hat{u}_2(t,k)|^2\right|\leq C(\|f\|_{H^{s_0}}) \|v_1-v_2\|_{\cx_T}.
\end{equation}
Let us show that once we assume  \eqref{a:1020}, we can establish the uniqueness. Indeed, plugging \eqref{a:1020} in the estimate for $|g'(t,k)|$, we obtain 
$$
\|F_1-F_2\|_{Y^{s_0,b}}\leq  C(\|f\|_{H^{s_0}}) \|v_1-v_2\|_{\cx_T}
$$ 
Going back to \eqref{a:1030}, we have (for say all $T: T<1$) 
$$
\|v_1-v_2\|_{\cx} \leq C(\|f\|_{H^{s_0}}) T^\de   \|v_1-v_2\|_{\cx_T}  (1+\|f\|_{H^{s_0}}^3)^2,
$$
which imply that for small enough $T=T(\|f\|_{H^{s_0}})$, $\|v_1-v_2\|_{\cx_T}=0$. 

Thus, again from \eqref{a:1020}, we obtain that $|u_1(t,k)|=|u_2(t,k)|$, which implies that $P_1(t,k)=P_2(t,k)$. This however means that $u_1=u_2$, so uniqueness follows. 
\subsubsection{Proof of \eqref{a:1020}} 
Expanding $|\hat{u}_j(t,k)|^2$ and taking the difference yields 
\begin{eqnarray*}
|\hat{u}_1(t,k)|^2-|\hat{u}_2(t,k)|^2 &=&  2 \Re(\hat{f}(k) (e^{i P_1(t,k)} \overline{\hat{v}_1(t,k)}-e^{i P_2(t,k)} \overline{\hat{v}_2(t,k)})) \\
&+& |\hat{v}_1(t,k)|^2 - |\hat{v}_2(t,k)|^2.
\end{eqnarray*} 
Thus, 
\begin{eqnarray*}
\left| |\hat{u}_1(t,k)|^2-|\hat{u}_2(t,k)|^2\right| &\leq &  C |\hat{f}(k)| (| \hat{v}_1(t,k)- \hat{v}_2(t,k)|+ |\hat{v}_1(t,k)|
|P_1(t,k)-P_2(t,k)|) + \\
&+& |\hat{v}_1(t,k)-\hat{v}_2(t,k)| (|\hat{v}_1(t,k)|+|\hat{v}_2(t,k)|). 
\end{eqnarray*} 
But 
$$
|P_1(t,k)-P_2(t,k)|\leq C T |k| \sup_{0<\tau<t} \left| |\hat{u}_1(t,k)|^2-|\hat{u}_2(t,k)|^2\right|.
$$
Thus, we have 
\begin{eqnarray*}
\left| |\hat{u}_1(t,k)|^2-|\hat{u}_2(t,k)|^2\right| &\leq & C T  
 \sup_{0<\tau<t} \left| |\hat{u}_1(\tau,k)|^2-|\hat{u}_2(\tau,k)|^2\right| |k||\hat{f}(k)| |\hat{v}_1(t,k)|+ \\
&+& C |\hat{v}_1(t,k)-\hat{v}_2(t,k)| (|\hat{v}_1(t,k)|+|\hat{v}_2(t,k)|+|\hat{f}(k)|)
\end{eqnarray*} 
We can now run a continuity argument in $A(t,k):=\sup_{0<\tau<t} \left| |\hat{u}_1(\tau,k)|^2-|\hat{u}_2(\tau,k)|^2\right|$, since (recalling that $s_0+s_1>1$)
$$
\sup_k |k||\hat{f}(k)| |\hat{v}_1(t,k)|\leq C \|f\|_{H^{s_0}} \|v_1\|_{H^{s_1}}\leq C \|f\|_{H^{s_0}} \|v_1\|_{\cx}.
$$
We have 
$$
A(t)\leq [C T \|f\|_{H^{s_0}} \|v_1\|_{\cx} ]A(t)+ C |\hat{v}_1(t,k)-\hat{v}_2(t,k)| (|\hat{v}_1(t,k)|+|\hat{v}_2(t,k)|+|\hat{f}(k)|).
$$
Thus, for $T$ small enough, $T=T(\|f\|_{H^{s_0}})$ (recall the bounds on $\|v_1\|_{\cx}$ are in terms of $C \|f\|_{H^{s_0}}^3$), we can hide the terms containing $A(t)$ on the right hand side.  We obtain 
$$
\left|\hat{u}_1(t,k)|^2-|\hat{u}_2(t,k)|^2\right|\leq A(t) \leq C |\hat{v}_1(t,k)-\hat{v}_2(t,k)| (|\hat{v}_1(t,k)|+|\hat{v}_2(t,k)|+|\hat{f}(k)|).
$$
It follows that (again, since $s_0+s_1>1$) 
\begin{eqnarray*}
|k| \left| |\hat{u}_1(t,k)|^2-|\hat{u}_2(t,k)|^2\right| &\leq &  C |k| |\hat{v}_1(t,k)-\hat{v}_2(t,k)| 
(|\hat{v}_1(t,k)|+|\hat{v}_2(t,k)|+|\hat{f}(k)|)\leq \\
&\leq & C \|v_1-v_2\|_{H^{s_1}}
(\|v_1\|_{H^{s_0}}+\|v_2\|_{H^{s_0}}+
\|f\|_{H^{s_0}})\leq \\
&\leq & C \|v_1-v_2\|_{\cx} (1+ \|f\|_{H^{s_0}}^3),
\end{eqnarray*}
which is \eqref{a:1020}. Thus, the uniqueness and thus the proof of Theorem \ref{theo:1} is complete.

\end{document}